\theoremstyle{plain}
\newtheorem{thm}{Theorem}[section]
\newtheorem{prop}[thm]{Proposition}
\newtheorem{lemm}[thm]{Lemma}
\newtheorem{cor}[thm]{Corollary}
 \newcommand{\Wi}{\widetilde}
\theoremstyle{definition}
\newtheorem{defn}[thm]{Definition}
\newtheorem{rmk}[thm]{Remark}
\newtheorem{con}[thm]{Conjecture}
\def\Z{{\mathbb Z}}
\def\N{{\mathbb N}}
\def\S{{\mathbb S}}
\def\cd{\protect\operatorname{cd}}
\def\cat{\protect\operatorname{cat}}
\def\Ker{\protect\operatorname{Ker}}
\def\nil{\protect\operatorname{nil}}
\def\Coind{\protect\operatorname{Coind}}
\title{On the LS-category of homomorphisms}
\author{Alexander Dranishnikov and Nursultan Kuanyshov}
\address{Alexander  Dranishnikov, Department of Mathematics, University of Florida, 358 Little Hall, Gainesville, FL 32611-8105, USA}
\email{dranish@math.ufl.edu}
\address{Nursultan Kuanyshov, Department of Mathematics, University of Florida, 358 Little Hall, Gainesville, FL 32611-8105, USA}
\email{kuanyshov@math.ufl.edu}
\subjclass[2000]{Primary 55M30; Secondary 55M25, 57R65, 57R67}
\keywords{Lusternik-Schnirelmann category, group homomorphism}
\begin{document}
\maketitle
\begin{abstract}
We prove the equality $\cat(\phi)=\cd(\phi)$ for homomorphisms $\phi:\Gamma\to \Lambda$ of a  torsion free finitely generated nilpotent groups $\Gamma$ to an arbitrary group $\Lambda$.
We construct an epimorphism $\psi:G\to H$ between geometrically finite groups with $\cat(\psi)> \cd(\psi)$.
\end{abstract}

\section{Introduction}
\begin{defn}
The (reduced) Lusternik-Schnirelmann category (LS-category), $\cat X$, of an ANR space $X$ is the minimal number $k$ such that $X$ admits an open cover by $k+1$ sets $U_0,U_1,\dots,U_k$ such that each $U_i$ is contractible in $X$.
\end{defn}
The Lusternik-Schnirelmann category is an important invariant, since it gives a lower bound on the number of critical points for a smooth real-valued function on a closed manifold~\cite{LS}.
Since it is a homotopy invariant, it can be defined for discrete groups $\Gamma$ as $\cat\Gamma=\cat B\Gamma$ where $B\Gamma=K(\Gamma, 1)$ is a classifying space.
Eilenberg and Ganea~\cite{EG} proved that the LS-category of a discrete group equals its cohomological dimension,
$\cat(\pi)=\cd(\pi)$. We recall that
the cohomological dimension of a group $\Gamma$  is defined as follows,
 $$\cd(\Gamma)=\max\{k\mid H^k(\Gamma,M)\ne 0\}$$ where the maximum is taken over all $\Z\Gamma$-modules $M$. 
\begin{thm}[\cite{Sch,DR}]
For the cohomological dimension of a discrete group $\Gamma$,
$$\cd(\Gamma)=\max\{k\mid(\beta_\Gamma)^k\ne 0\}$$ where $\beta_\Gamma\in H^1(\Gamma,I(\Gamma))$ is the Berstein-Schwarz class of $\Gamma$.
\end{thm}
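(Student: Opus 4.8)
The plan is to reduce the statement to the \emph{universality} of the Berstein--Schwarz class and then to prove that universality directly. First I would record the trivial inequality: if $(\beta_\Gamma)^k\ne 0$, then the coefficient group $H^k(\Gamma,I(\Gamma)^{\otimes k})$ is nonzero, so $\cd(\Gamma)\ge k$; taking the maximum gives $\cd(\Gamma)\ge\max\{k\mid(\beta_\Gamma)^k\ne 0\}$. I would also note the monotonicity $(\beta_\Gamma)^k=0\Rightarrow(\beta_\Gamma)^{k+1}=(\beta_\Gamma)\cup(\beta_\Gamma)^k=0$, so that the set $\{k\mid(\beta_\Gamma)^k\ne 0\}$ is an initial segment and the maximum is well defined.

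The reverse inequality is the substance, and I would obtain it from the following universality property: for every $\Z\Gamma$-module $M$ and every class $\alpha\in H^k(\Gamma,M)$ there is a coefficient homomorphism $h\colon I(\Gamma)^{\otimes k}\to M$ with $h_*\big((\beta_\Gamma)^k\big)=\alpha$. Granting this, if $m=\max\{k\mid(\beta_\Gamma)^k\ne 0\}$, then $(\beta_\Gamma)^{k}=0$ for all $k>m$, hence every $\alpha\in H^{k}(\Gamma,M)$ equals $h_*(0)=0$; thus $H^{k}(\Gamma,M)=0$ for all $M$ and all $k>m$, giving $\cd(\Gamma)\le m$. Together with the trivial direction this yields $\cd(\Gamma)=m$.

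To prove universality I would work with the normalized bar resolution $B_*\to\Z$, in which $B_k\cong\Z\Gamma\otimes_{\Z} I(\Gamma)^{\otimes k}$ and the $k$-th syzygy is $\Omega_k=\Ker(B_{k-1}\to B_{k-2})$. Dimension shifting identifies $H^k(\Gamma,M)$ with $\operatorname{Hom}_\Gamma(\Omega_k,M)$ modulo the maps extending over $B_{k-1}$, and under this identification $\beta_\Gamma\in H^1(\Gamma,I(\Gamma))$ is represented by $\operatorname{id}\colon\Omega_1=I(\Gamma)\to I(\Gamma)$. Using the Alexander--Whitney diagonal I would check that the cup power $(\beta_\Gamma)^k$ is represented by the projection $\epsilon\otimes\operatorname{id}^{\otimes k}\colon B_k=\Z\Gamma\otimes I(\Gamma)^{\otimes k}\to I(\Gamma)^{\otimes k}$, which, being a cocycle, factors as $\theta_k\circ d_k$ for a canonical $\Gamma$-map $\theta_k\colon\Omega_k\to I(\Gamma)^{\otimes k}$ representing $(\beta_\Gamma)^k$. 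The universality then reduces to the claim that every $a\colon\Omega_k\to M$ factors through $\theta_k$; equivalently, that $\theta_k$ is a split monomorphism of $\Z\Gamma$-modules. The base case $k=1$ is trivial, since $\theta_1=\operatorname{id}$, and here the required class is produced directly by lifting the surjection $\Z\Gamma\to\Z$ over the extension representing $\alpha$ and restricting to $I(\Gamma)$.

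The main obstacle is precisely this last structural claim: the split-injectivity of the syzygy representative $\theta_k\colon\Omega_k\to I(\Gamma)^{\otimes k}$, equivalently the surjectivity of $\operatorname{Hom}_\Gamma(I(\Gamma)^{\otimes k},M)\to H^k(\Gamma,M)$, $h\mapsto h_*(\beta_\Gamma)^k$. I would establish it by producing an explicit $\Gamma$-equivariant retraction of $\theta_k$ from the contracting homotopy of the bar resolution together with the coassociativity of the Alexander--Whitney map; the care needed is that the obvious $\Z$-linear splitting $x\mapsto 1\otimes x$ of $\epsilon\otimes\operatorname{id}^{\otimes k}$ is \emph{not} $\Gamma$-equivariant, so the retraction must be assembled inductively from the lower syzygies. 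Once split-injectivity is in hand, universality and hence the theorem follow formally as above.
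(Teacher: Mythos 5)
The paper offers no proof of this statement at all: it is quoted from Schwarz and Dranishnikov--Rudyak, just as the universality property is quoted separately as Theorem~2.4. Your first two paragraphs correctly reconstruct the intended logical skeleton --- the inequality $\cd(\Gamma)\ge\max\{k\mid(\beta_\Gamma)^k\ne 0\}$ is immediate from the definition of $\cd$, and the reverse inequality follows formally from universality, since for $k>m$ every class in $H^k(\Gamma,M)$ is the image of $(\beta_\Gamma)^k=0$ under a coefficient homomorphism. That part is complete and is exactly how the cited sources (and, implicitly, this paper) intend the theorem to be read.

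Where you go beyond the paper is in attempting to prove universality itself, and there the one step you flag as ``the main obstacle'' is indeed the entire content: you never actually establish that $\theta_k\colon\Omega_k\to I(\Gamma)^{\otimes k}$ is split injective, only propose to assemble a retraction from the contracting homotopy of the bar resolution. That claim is true, but the inductive construction of an equivariant retraction inside the bar resolution is delicate, and as written your argument is a programme rather than a proof. The standard route in \cite{DR} and \cite{Sch} avoids the bar resolution altogether: tensor the augmentation sequence $0\to I(\Gamma)\to\Z\Gamma\to\Z\to 0$ over $\Z$ with $I(\Gamma)^{\otimes j}$ (diagonal actions) to get short exact sequences $0\to I(\Gamma)^{\otimes(j+1)}\to\Z\Gamma\otimes I(\Gamma)^{\otimes j}\to I(\Gamma)^{\otimes j}\to 0$ whose middle terms are free (since $\Z\Gamma\otimes N$ with diagonal action is isomorphic to $\Z\Gamma\otimes N_0$ with action on the first factor), and splice them into a free resolution of $\Z$. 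Its $k$-th syzygy is literally $I(\Gamma)^{\otimes k}$, the iterated connecting homomorphism realizes cup product with $\beta_\Gamma$, and $(\beta_\Gamma)^k$ is represented by the identity map of the syzygy; universality is then the tautology that every $\Gamma$-map out of $I(\Gamma)^{\otimes k}$ factors through the identity. If you replace your bar-resolution syzygy argument by this splicing, the remaining gap closes and your proof is correct.
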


The {\em LS-category of the map} $f:X\to Y$, $\cat f$, is the minimal number $k$ such that X admits an open cover by $k+1$ open sets $U_{0},U_{2},..., U_{k}$ with  nullhomotopic restrictions $f|_{U_i}:U_i\to Y$ for all $i$. The LS-category $\cat\phi$ of a group homomorphism $\phi:\Gamma\to\pi$ is defined as $\cat f$ where the map
$f:B\Gamma\to B\pi$ induces the homomorphism $\phi$ for the fundamental groups. 

The {\em cohomological dimension} $\cd(\phi)$ of a group homomorphism $\phi:\Gamma\to\pi$ was defined by Mark Grant about 10 years ago on Mathoverflow~\cite{Gr} as maximum of $k$ such that
there is a $\pi$-module $M$ with  the nonzero  induced  homomorphism $\phi^*:H^k(\pi,M)\to H^k(\Gamma,M)$.
In view of universality of the Berstein-Schwarz class~\cite{DR} for any homomorphism $\phi:\Gamma\to\pi$
$$
\cd(\phi)=\max\{k\mid \phi^*(\beta_\pi)^k\ne 0\}.
$$
This brings the inequality $\cd(\phi)\le\cat\phi$ for all homomorphisms.

In view of the eqality $\cd(\Gamma)=\cat\Gamma$, the following conjecture seems to be natural:
\begin{con}\label{con}
For any group homomorphism $\phi:\Gamma\to\pi$ always
$$
\cat \phi=\cd(\phi).
$$
\end{con}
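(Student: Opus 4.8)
Since the inequality $\cd(\phi)\le\cat\phi$ is already recorded above---it follows from the universality of the Berstein-Schwarz class together with the standard cup-length estimate for the sectional category (Schwarz genus)---the entire content of the conjecture is the reverse inequality $\cat\phi\le\cd(\phi)$. The plan is to realize $\cat\phi$ as a sectional category and then to kill the obstructions to building an economical cover by means of the powers of $\phi^\ast\beta_\pi$.

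First I would reduce the problem to a question about lifting a fibration. Fix a model $f\colon B\Gamma\to B\pi$ of $\phi$; then the condition that a restriction $f|_U$ be null-homotopic is precisely the condition that $f|_U$ lift through the path fibration $p\colon PB\pi\to B\pi$, whose fibre is $\Omega B\pi\simeq\pi$. Hence $\cat\phi$ equals the sectional category of the pullback $f^\ast p$, and, passing to the Ganea fibrations $p_n\colon G_n(B\pi)\to B\pi$, the inequality $\cat\phi\le n$ becomes the statement that $f$ admits a lift through $p_n$. The task is thus to show that $f$ lifts through $p_{\cd(\phi)}$.

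Second, I would run obstruction theory for such a lift skeleton by skeleton over $B\Gamma$. The successive obstructions lie in the groups $H^{k}(B\Gamma;\pi_{k-1}F)$, where $F$ is the fibre, and the point of the Berstein-Schwarz formalism is that the leading family of these obstructions is controlled by the powers of $\phi^\ast\beta_\pi$: by the very definition of $\cd(\phi)$ one has $\phi^\ast(\beta_\pi)^{k}=0$ for $k>\cd(\phi)$, which is exactly the vanishing of the obstruction living in the top multiplicative filtration degree. Since the target is aspherical, I would organize the obstructions along the multiplicative filtration coming from the universal $I(\pi)$-coefficient class $\beta_\pi$, so that the vanishing of $\phi^\ast(\beta_\pi)^{\cd(\phi)+1}$ produces the desired lift, hence a cover by $\cd(\phi)+1$ charts with null-homotopic restrictions.

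The hard part will be that, in contrast with the absolute Eilenberg-Ganea theorem, there is no dimension-reduction shortcut available for a \emph{map}: one cannot simply replace $B\Gamma$ by a complex of dimension $\cd(\phi)$, because the quantity $\cd\Gamma$ that trivially bounds $\cat\phi$ from above may be strictly larger than $\cd(\phi)$. Consequently the higher obstructions are genuinely present, and they need not be cup powers of $\phi^\ast\beta_\pi$; deciding their vanishing demands more than the Berstein-Schwarz cup-length. I expect this gap to be the decisive obstacle, and I would first test it on those classes of groups whose classifying spaces force the higher obstructions back into the multiplicative filtration---most naturally on torsion-free finitely generated nilpotent $\Gamma$, where the central series presents $B\Gamma$ as a tower of circle-bundle fibrations and one can induct on the Hirsch length, reducing each step to the one-dimensional, cup-length-controlled case. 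Absent such structure I would not expect the two invariants to coincide, so part of the work is to delimit precisely when the higher obstructions are forced to vanish.
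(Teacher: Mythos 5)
The first thing to say is that Conjecture~\ref{con} is \emph{false} as stated, and the second half of the paper is devoted to refuting it: starting from Bolotov's $4$-manifold $M$ with $\pi_1(M)=\Z\ast\Z^3$ and applying hyperbolization, the authors produce a map $g:W\to T^3$ from a closed aspherical $4$-manifold inducing an epimorphism $g_\#$ of fundamental groups with $\cat g_\#=3$ but $\cd(g_\#)<3$. The obstruction that keeps $\cat g_\#$ large is detected by $Sq^2$ acting on a degree-three class pulled back from $S^3$ --- precisely a ``higher obstruction'' that is not a cup power of $\phi^*\beta_\pi$ and is therefore invisible to $\cd(\phi)$. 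So the obstacle you flag in your final paragraph is not merely a technical difficulty in your scheme; it is fatal, and no completion of your obstruction-theoretic plan can exist in general. To your credit, you explicitly say you would not expect the two invariants to coincide absent special structure on $\Gamma$; that instinct is correct, but it means what you have written is a research plan that correctly locates the difficulty rather than a proof of anything.

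For the positive case that the paper does establish ($\Gamma$ finitely generated torsion-free nilpotent), the argument is quite different from the one you sketch. The authors first reduce to epimorphisms (Theorems~\ref{cat of mono} and~\ref{cd of mono}), then realize an epimorphism $\phi:\Gamma\to\pi$ of such groups as a locally trivial bundle of nilmanifolds $f:M^m\to N^n$ with compact connected fiber (Lemma~\ref{nilbundle}, by induction on the sum of the nilpotency classes, splitting off pieces of the center rather than inducting on Hirsch length via circle bundles). The upper bound is then the trivial dimension bound $\cat\phi\le\dim N=n$; the real work is the lower bound $\cd(\phi)\ge n$, proved by showing that $f^*:H^n(N;\Z\pi)\to H^n(M;\Z\pi)$ is nonzero via a comparison with compactly supported cohomology of universal covers (Lemma~\ref{compact support}). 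No lifting through Ganea fibrations and no analysis of obstructions beyond the Berstein--Schwarz class is needed in the positive direction: the point is that for these groups $\cd(\phi)$ is forced all the way up to $\dim N$, which already coincides with the obvious upper bound on $\cat\phi$.
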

In~\cite{Sc} Jamie Scott considered this conjecture for geometrically finite groups and he proved it for monomorphisms of any groups
and for homomorphisms of free and free abelian groups. 
 In ~\cite{Gr} Tom Goodwillie gave an example of an epimorphism of an  infinitely generated group $\phi:G\to\Z^2$ with $cd(\phi)=1$ that  disproves the conjecture.

In the first part of this paper we prove Conjecture~\ref{con} for finitely generated torsion free nilpotent groups. In the second part we present a 
finitely generated
counterexample  by constructing a 
map between aspherical manifolds $f:M\to N$ with $$\cat f>\cd(f_\#:\pi_1(M)\to\pi_1(N)).$$

\section{Preliminaries}

\subsection{Nilpotent groups}
The upper central series of a group $\Gamma$ is a chain of subgroups 
$${e}=Z_{0} \leqslant Z_{1} \leqslant....\leqslant Z_{n} \leqslant.... $$
where $Z_{1}=Z(\Gamma)$ is the center of the group, and $Z_{i+1}$ is the preimage under the canonical epimorphism $\Gamma\to\Gamma/Z_{i}$ of the center of $\Gamma/Z_{i}$.
A group $\Gamma$ is {\em nilpotent} if $Z_{n}=\Gamma$ for some $n$.
The least such $n$ is called  {\em the nilpotency class} of $\Gamma$, denoted $\nil(\Gamma)$. Note that the groups with the nilpotency class one are exactly abelian groups.

The lower central series of a group $\Gamma$ is a chain of subgroups 
$$\Gamma=\gamma_0(\Gamma)\ge \gamma_1(\Gamma)\ge\gamma_2(\Gamma)\ge...$$
defined as $\gamma_{i+1}(\Gamma)=[\gamma_i(\Gamma),\Gamma]$.
It's known that for nilpotent groups $\Gamma$ the nilpotency class $\nil(\Gamma)$ equals the least $n$ for which $\gamma_n(\Gamma)=1$.

\begin{prop}\label{nil}
(1) Let $\phi:\Gamma\rightarrow \Gamma'$ be an epimorphism. Then $\phi(Z(\Gamma))\subset Z(\Gamma')$ and $\phi(\gamma_i(\Gamma))=\gamma_i(\Gamma')$ for  all $i$.

(2) For any finitely generated torsion free nilpotent group $\Gamma$, any $z\in\Gamma$, and any $n\in\N$ the condition $z^n\in Z(\Gamma)$ implies $z\in Z(\Gamma)$.
\end{prop}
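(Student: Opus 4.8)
For part (1), I would argue directly from surjectivity. If $z\in Z(\Gamma)$ and $g'\in\Gamma'$, choose $g\in\Gamma$ with $\phi(g)=g'$ and compute $\phi(z)g'=\phi(zg)=\phi(gz)=g'\phi(z)$, so $\phi(z)\in Z(\Gamma')$. For the lower central series I would induct on $i$: the case $i=0$ is $\phi(\Gamma)=\Gamma'$, and for the step I use that a homomorphism carries the subgroup generated by the commutators $[a,g]$ (with $a\in\gamma_i(\Gamma)$, $g\in\Gamma$) onto the subgroup generated by the $[\phi(a),\phi(g)]$; combined with $\phi(\gamma_i(\Gamma))=\gamma_i(\Gamma')$ and $\phi(\Gamma)=\Gamma'$ this gives $\phi(\gamma_{i+1}(\Gamma))=[\gamma_i(\Gamma'),\Gamma']=\gamma_{i+1}(\Gamma')$. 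This part is routine; surjectivity is used in both halves.

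For part (2) I would first reduce to a statement about pairs. Since $z^n\in Z(\Gamma)$ means $[z^n,g]=1$ for every $g\in\Gamma$, and $z\in Z(\Gamma)$ means $[z,g]=1$ for every $g$, it suffices to prove that in a torsion-free nilpotent group $[z^n,g]=1$ implies $[z,g]=1$. The basic tool is the commutator expansion
$$[z^n,g]=\prod_{i=0}^{n-1}[z,g]^{z^{i}},$$
which follows by induction from the identity $[xy,g]=[x,g]^{y}[y,g]$. When $w=[z,g]$ happens to be \emph{central}, this collapses to $[z^n,g]=w^n$, so $[z^n,g]=1$ forces $w^n=1$ and hence $w=1$ by torsion-freeness. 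The entire problem is therefore to manoeuvre into this central situation.

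To reach it I would show, by induction on $k$, that $w=[z,g]$ lies in successively deeper terms of the (isolated) lower central series until it is trapped in the trivial group. Concretely, set $\tau_k(\Gamma)=\sqrt{\gamma_k(\Gamma)}$ and pass to $\bar\Gamma=\Gamma/\tau_{k+1}(\Gamma)$. At stage $k$ we have $w\in\tau_k(\Gamma)$; since $[\tau_k(\Gamma),\Gamma]\subseteq\tau_{k+1}(\Gamma)$, the image $\bar w=[\bar z,\bar g]$ is central in $\bar\Gamma$, so the collapsed identity gives $\bar w^{\,n}=\overline{[z^n,g]}=\bar 1$. Because $\bar\Gamma$ is torsion-free we conclude $\bar w=1$, i.e. $w\in\tau_{k+1}(\Gamma)$; as $\tau_c(\Gamma)=\sqrt{\gamma_c(\Gamma)}=1$ for a torsion-free group of class $c$, after finitely many steps $w=1$.

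The step I expect to be the main obstacle is the \emph{torsion control}. The naive plan — quotient by $\gamma_{c-1}(\Gamma)$ to lower the class and induct — fails, because a quotient of a torsion-free nilpotent group by a term of the lower central series can acquire torsion (already $\Gamma^{\mathrm{ab}}$ may have torsion, e.g. for a Heisenberg-type group with $[x_1,x_2]=z^2$). This is precisely where the torsion-free hypothesis is spent: one must replace $\gamma_k$ by its isolator $\tau_k=\sqrt{\gamma_k}$ and invoke P. Hall's facts that in a nilpotent group the isolator of a subgroup is again a subgroup, that $[\tau_k,\Gamma]\subseteq\tau_{k+1}$, and that each $\Gamma/\tau_k$ is torsion-free. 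Alternatively, and perhaps more transparently, I could embed the finitely generated $\Gamma$ as a lattice in its Mal'cev completion, a simply connected nilpotent Lie group $L$, and read the statement off the Lie algebra: $z^n=\exp(n\log z)$ is central iff $n\log z$ is central iff $\log z$ is central iff $z$ is central. In that route the only delicate point is verifying $z^n\in Z(L)$, which holds because the centralizer of the central element $z^n$ is a closed (algebraic) subgroup containing the Zariski-dense lattice $\Gamma$, hence is all of $L$.
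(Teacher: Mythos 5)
Your proposal is correct, and for part (2) it is more self-contained than what the paper actually does. The paper disposes of (1) by citation (your direct computation is exactly the routine argument being cited), and for (2) it cites Mal'cev's theorem and only sketches a proof: $z$ and $z^n$ have the same centralizer, which is seen by embedding $\Gamma$ into a unipotent matrix group and observing that $z$ lies in the Zariski closure of $\langle z^n\rangle$. Your second route (Mal'cev completion, $\exp/\log$, and Zariski density of the lattice to upgrade $z^n\in Z(\Gamma)$ to $z^n\in Z(L)$) is essentially a fleshed-out version of that sketch, and your handling of the delicate point is the right one. Your first route, via the isolated lower central series $\tau_k=\sqrt{\gamma_k}$ and the collapse of $[z^n,g]$ to $[z,g]^n$ modulo $\tau_{k+1}$, is genuinely different: it is purely combinatorial, needs only P.~Hall's isolator facts rather than the Mal'cev embedding, and in particular does not use finite generation of $\Gamma$ at all (the hypothesis ``finitely generated'' in the statement is only needed for the Lie-theoretic route). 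You also correctly identify why the naive induction on nilpotency class fails (quotients by $\gamma_k$ can acquire torsion), which is precisely the issue the isolators are designed to fix. Both routes are sound; the elementary one would make the paper's proof self-contained, which the authors themselves note their citation is not.
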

\begin{proof}
(1) Straightforward (see for example~\cite{B}, Theorem 5.1.3).

(2) This is Mal'cev's Theorem 1 in~\cite{Ma2}. We note that the proof there is not selfcontained.  This statement
follows from the fact that  $z$ and $z^n$ have the
same centralizers. The latter can be proven using Mal’cev theorem about embedding $\Gamma$  into
the group of unipotent upper triangular matrices ~\cite{Ra} and the fact that $z$ belongs to Zariski
closure of $z^n$.
 \end{proof}

\begin{cor}\label{nil2}
 For any finitely generated torsion free nilpotent group $\Gamma$ the group $\Gamma/Z(\Gamma)$ is torsion free finitely generated nilpotent group.
\end{cor}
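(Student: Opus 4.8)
The plan is to verify the three asserted properties of $\Gamma/Z(\Gamma)$ one at a time, observing that the only one requiring genuine input is torsion-freeness, and that this is precisely what Proposition~\ref{nil}(2) was set up to supply. First I would dispose of the two formal properties. Finite generation is immediate, since a quotient of a finitely generated group is finitely generated. Nilpotency is equally standard: for a normal subgroup $N\trianglelefteq\Gamma$ one has $\gamma_i(\Gamma/N)=\gamma_i(\Gamma)N/N$, so $\gamma_n(\Gamma)=1$ forces $\gamma_n(\Gamma/Z(\Gamma))=1$, and a quotient of a nilpotent group is again nilpotent. Thus $\Gamma/Z(\Gamma)$ is a finitely generated nilpotent group independently of any torsion considerations.

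The substantive step is torsion-freeness, and here I would invoke Proposition~\ref{nil}(2) directly. Suppose $\bar z=zZ(\Gamma)$ is an element of finite order $n$ in $\Gamma/Z(\Gamma)$; by definition this means $z^n\in Z(\Gamma)$. Since $\Gamma$ is finitely generated torsion free nilpotent, Proposition~\ref{nil}(2) applies verbatim and yields $z\in Z(\Gamma)$, i.e.\ $\bar z$ is the identity of $\Gamma/Z(\Gamma)$. Hence the quotient has no nontrivial elements of finite order, and together with the previous paragraph this establishes the corollary.

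I do not expect a real obstacle here, because essentially all the difficulty has been absorbed into Proposition~\ref{nil}(2), which in turn rests on Mal'cev's theorem that $z$ and $z^n$ have equal centralizers via the unipotent upper-triangular embedding. The only point deserving care is the translation of the finite-order condition in the quotient into the statement $z^n\in Z(\Gamma)$, so that Proposition~\ref{nil}(2) can be applied directly; no further computation is needed.
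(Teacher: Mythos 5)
Your proof is correct and follows exactly the route the paper intends: torsion-freeness from Proposition~\ref{nil}(2) applied to $z^n\in Z(\Gamma)$, and nilpotency plus finite generation from the standard behavior of quotients (which is what the paper's appeal to Proposition~\ref{nil}(1) amounts to). No discrepancies.
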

\begin{proof}
The torsion free part follows from (2). The rest follows from (1).
\end{proof}

 We note that $\nil(\Gamma/Z(\Gamma)) <\nil(\Gamma)$.
    
Suppose  that $G$ is a connected, simply connected nilpotent Lie group and $\Gamma\subset G$ is a uniform lattice. Then $G$ is the universal cover for $\Gamma$ and $N=G/\Gamma$ is an aspherical manifold, called a {\em nilmanifold}. 

\textbf{Mal'cev Theorem.}~\cite{Ma1} {\em Every torsion free finitely generated nilpotent group $\Gamma$ can be realized as the fundamental group of some nilmanifold. } 

The corresponding simply connected Lie groups $G$ is obtained as the Mal'cev completion of $\Gamma$.

\

\subsection{Ganea-Schwarz's approach to cat} Recall that an element of an iterated join $X_0*X_1*\cdots*X_n$ of topological spaces is a formal linear combination $t_0x_0+\cdots +t_nx_n$ of points $x_i\in X_i$ with $\sum t_i=1$, $t_i\ge 0$, in which all terms of the form $0x_i$ are dropped. Given fibrations $f_i:X_i\to Y$ for $i=0, ..., n$, the fiberwise join of spaces $X_0, ..., X_n$ is defined to be the space
\[
    X_0*_YX_1*_Y\cdots *_YX_n=\{\ t_0x_0+\cdots +t_nx_n\in X_0*\cdots *X_n\ |\ f_0(x_0)=\cdots =f_n(x_n)\ \}.
\]
The fiberwise join of fibrations $f_0, ..., f_n$ is the fibration 
\[
    f_0*_Y*\cdots *_Yf_n: X_0*_YX_1*_Y\cdots *_YX_n \longrightarrow Y
\]
defined by taking a point $t_0x_0+\cdots +t_nx_n$ to $f_i(x_i)$ for any $i$ such that $t_i \neq 0$. 

When $X_i=X$ and $f_i=f:X\to Y$ for all $i$  the fiberwise join of spaces is denoted by $*^{n+1}_YX$ and the fiberwise join of fibrations is denoted by $*_Y^{n+1}f$. 

For a path connected space $X$, we turn an inclusion of a point $*\to X$ into a fibration $p^X_0:G_0(X)\to X$, whose fiber is known to be the loop space $\Omega X$.  The $n$-th Ganea space of $X$ is defined to be the space $G_n(X)=*_X^{n+1}G_0(X)$, while the $n$-th Ganea fibration $p^X_n:G_n(X)\to X$
is the fiberwise join $\ast^{n+1}_Xp^X_0$. Then the fiber of
$p^X_n$ is $\ast^{n+1}\Omega X$.

The following theorem give the Ganea-Shwarz characterization of $\cat$~\cite{Sch},\cite{CLOT}: 
\begin{thm}
If $X$ is a connected ANR, then  $\cat X\le n$ if and only if the fibration $p^X_n:G_n(X)\to X$ admits a section.
\end{thm}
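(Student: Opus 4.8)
The plan is to prove both implications from the single observation that a map $g\colon A\to X$ lifts through $p^X_0\colon G_0(X)\to X$ if and only if $g$ is nullhomotopic. Indeed, since $G_0(X)$ is the mapping path space of the inclusion $\ast\to X$, a lift of $g$ amounts to a choice, continuous in $a\in A$, of a path in $X$ from the basepoint to $g(a)$, i.e. a nullhomotopy of $g$. Applied to an inclusion $U\hookrightarrow X$, this says precisely that $U$ is contractible in $X$ if and only if the inclusion lifts to $G_0(X)$. The fiberwise join then serves to assemble such local lifts into a global section, and conversely to extract local lifts from a section via the barycentric coordinates of the join.

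For the ``only if'' direction, suppose $\cat X\le n$, so that $X=U_0\cup\cdots\cup U_n$ with each inclusion $U_i\hookrightarrow X$ nullhomotopic. First I would lift each inclusion to a map $s_i\colon U_i\to G_0(X)$ with $p^X_0\circ s_i$ equal to the inclusion. Since $X$ is an ANR, in particular paracompact, I would choose a partition of unity $\{\lambda_i\}_{i=0}^n$ subordinate to the cover, so that $\{\lambda_i>0\}\subset U_i$, and define
$$s(x)=\sum_{i}\lambda_i(x)\,s_i(x),$$
a formal convex combination in which the terms with $\lambda_i(x)=0$ are dropped. Because $p^X_0(s_i(x))=x$ whenever $\lambda_i(x)>0$, all of the points $s_i(x)$ occurring lie in the same fiber, so $s(x)$ is a genuine point of the fiberwise join $G_n(X)=\ast^{n+1}_XG_0(X)$ lying over $x$. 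Thus $p^X_n\circ s=\mathrm{id}_X$ and $s$ is the desired section.

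For the ``if'' direction, suppose $p^X_n$ admits a section $s\colon X\to G_n(X)$. The barycentric coordinates of the iterated join give well-defined continuous functions $t_0,\dots,t_n\colon G_n(X)\to[0,1]$ with $\sum_j t_j=1$, and on the open set where $t_j>0$ the $j$-th join coordinate defines a continuous map into $G_0(X)$. I would set $V_j=\{x\in X\mid t_j(s(x))>0\}$; these $n+1$ open sets cover $X$, since $\sum_j t_j(s(x))=1$ for every $x$. On $V_j$ the $j$-th coordinate of $s$ yields a map $\sigma_j\colon V_j\to G_0(X)$, and the fiberwise join condition forces $p^X_0\circ\sigma_j$ to be the inclusion $V_j\hookrightarrow X$. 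Hence each $V_j$ is contractible in $X$, giving $\cat X\le n$.

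The main obstacle I expect is purely point-set: verifying that the assembled map $s$ is continuous across the loci where some $\lambda_i$ vanishes, where the corresponding join coordinate is discarded, and dually that the coordinate functions $t_j$ and the $j$-th-coordinate maps are continuous on the join. Both rest on the quotient description of $X_0\ast\cdots\ast X_n$, in which the identifications occur exactly where a coordinate $t_j$ equals $0$. The ANR hypothesis enters at precisely one point, namely to guarantee the numerable partition of unity used in the forward direction.
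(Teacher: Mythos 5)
Your argument is correct and is the standard Schwarz--Ganea proof: the paper itself states this theorem without proof, citing \cite{Sch} and \cite{CLOT}, and your two directions (assembling local nullhomotopies into a section via a partition of unity, and extracting an open cover from a section via the barycentric coordinates) are exactly the argument found in those references. The one point you rightly flag --- continuity of the assembled section and of the coordinate functions $t_j$ --- is handled in the literature by using Milnor's topology on the (fiberwise) join, under which the $t_j$ and the $j$-th coordinate maps on $\{t_j>0\}$ are continuous by construction; with that convention in place your proof is complete, the ANR hypothesis entering only through paracompactness to guarantee the subordinate partition of unity.
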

This characterization can be extendend to maps:
\begin{thm}\label{cat map}
If $f:X\to Y$ is a map between connected ANRs, then  $\cat f\le n$ if and only if there is a lift of $f$ with respect to
the fibration $p^Y_n:G_n(Y)\to Y$ admits a section.
\end{thm}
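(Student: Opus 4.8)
The statement to prove is that $\cat f\le n$ if and only if $f$ admits a lift $\tilde f:X\to G_n(Y)$ through the Ganea fibration, i.e.\ $p^Y_n\circ\tilde f=f$ (equivalently, the pullback fibration $f^*p^Y_n$ over $X$ has a section). The plan is to run the proof of the absolute statement (Ganea--Schwarz) in a parametrized form; indeed the case $X=Y$, $f=\mathrm{id}$ recovers the preceding theorem verbatim, so the only point is to carry the basepoints of the homotopies along $f$. The backbone is the following elementary observation about the $0$-th Ganea fibration $p^Y_0:G_0(Y)\to Y$, whose total space $G_0(Y)$ is contractible (it is the based path space of $Y$): for an open set $U\subset X$, the restriction $f|_U:U\to Y$ is nullhomotopic if and only if it lifts to $G_0(Y)$, that is, there is $s:U\to G_0(Y)$ with $p^Y_0\circ s=f|_U$. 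One direction is immediate since $G_0(Y)$ is contractible; for the other, a lift $s$ assigns continuously to each $u$ a path in $Y$ from the basepoint to $f(u)$, which is precisely a nullhomotopy of $f|_U$.

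For the forward implication, I would assume $\cat f\le n$ and choose an open cover $U_0,\dots,U_n$ of $X$ with each $f|_{U_i}$ nullhomotopic. By the observation this yields partial lifts $s_i:U_i\to G_0(Y)$. Since an ANR is metrizable and hence paracompact, there is a partition of unity $\{\lambda_i\}_{i=0}^n$ subordinate to $\{U_i\}$. I then define $\tilde f:X\to G_n(Y)$ by
$$\tilde f(x)=\sum_{i:\,\lambda_i(x)\ne 0}\lambda_i(x)\,s_i(x).$$
This is a legitimate point of the fiberwise join $G_n(Y)=*_Y^{n+1}G_0(Y)$ because all summands satisfy $p^Y_0(s_i(x))=f(x)$ and therefore lie in one common fiber; continuity holds thanks to the convention that terms with $\lambda_i(x)=0$ are dropped. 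By construction $p^Y_n\circ\tilde f=f$, so $\tilde f$ is the required lift.

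For the reverse implication, suppose $f$ lifts to $\tilde f:X\to G_n(Y)$ with $p^Y_n\circ\tilde f=f$. The join carries the standard open cover by the sets $V_i=\{\,t_0\xi_0+\cdots+t_n\xi_n\mid t_i\ne 0\,\}$, $i=0,\dots,n$, which cover $G_n(Y)$ since every point has some nonzero barycentric coordinate. Each $V_i\cap G_n(Y)$ deformation retracts, fiberwise over $Y$, onto the $i$-th copy of $G_0(Y)$ by sliding $t_i$ up to $1$; as $G_0(Y)$ is contractible, every $V_i\cap G_n(Y)$ is contractible. Setting $U_i=\tilde f^{-1}(V_i)$ gives an open cover of $X$, and $\tilde f|_{U_i}:U_i\to V_i\cap G_n(Y)$ is nullhomotopic because its target is contractible. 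Composing such a nullhomotopy with $p^Y_n$ produces a nullhomotopy of $f|_{U_i}=p^Y_n\circ\tilde f|_{U_i}$ inside $Y$, whence $\cat f\le n$.

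The only genuinely technical points are the continuity and well-definedness of the glued lift $\tilde f$ in the forward direction — which rely on the dropping convention in the join and on paracompactness of $X$ — and the fiberwise contractibility of the pieces $V_i\cap G_n(Y)$ in the reverse direction. I expect neither to present a real obstacle: the whole argument is the standard Schwarz-genus computation behind the absolute Ganea--Schwarz theorem, run with $f$ inserted in place of the identity, so the main care is merely bookkeeping of basepoints along $f$.
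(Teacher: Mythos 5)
Your proof is correct. The paper itself states this theorem without proof, citing Schwarz and Cornea--Lupton--Oprea--Tanr\'e, and your argument is exactly the standard one from those sources: partial lifts to the contractible space $G_0(Y)$ glued by a partition of unity (using paracompactness of the ANR $X$) for one direction, and the cover of the join by the sets $\{t_i\ne 0\}$, each fiberwise deformation retracting onto a contractible copy of $G_0(Y)$, for the other. The only caveat worth recording is the one you already flag: the gluing formula $\sum_i\lambda_i(x)s_i(x)$ is continuous provided the fiberwise join carries the topology in which the barycentric coordinates $t_i$ and the partial coordinate maps $x_i$ (where $t_i\ne 0$) are continuous (the Milnor topology); with that convention, and using path-connectedness of $Y$ to normalize nullhomotopies to the basepoint, every step goes through.
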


\subsection{Berstein-Schwarz cohomology class}
The Berstein-Schwarz class of a discrete group $\pi$ is the first obstruction $\beta_{\pi}$ to a lift of $B\pi=K(\pi,1)$ to the universal covering $E\pi$. 
Note that $\beta_{\pi}\in H^1(\pi,I(\pi))$ where $I(\pi)$ is the augmentation ideal of the group ring $\Z\pi$~\cite{Be},\cite{Sch}.

\begin{thm}[Universality~\cite{DR},\cite{Sch}]\label{universal}
For any cohomology class $\alpha\in H^k(\pi,L)$ there is a homomorphism of $\pi$-modules $I(\pi)^k\to L$ such that the induced homomorphism for cohomology takes $(\beta_{\pi})^k\in H^k(\pi;I(\pi)^k)$ to $\alpha$  where $I(\pi)^k=I(\pi)\otimes\dots\otimes I(\pi)$ and $(\beta_{\pi})^k=\beta_{\pi}\smile\dots\smile\beta_{\pi}$.
\end{thm}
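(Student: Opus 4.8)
The plan is to recast everything in terms of $\operatorname{Ext}$ groups over the group ring $\Z\pi$ and to exhibit $(\beta_\pi)^k$ as the class of an explicit length-$k$ extension that is the truncation of a free resolution of $\Z$. Recall that $\beta_\pi\in H^1(\pi,I(\pi))=\operatorname{Ext}^1_{\Z\pi}(\Z,I(\pi))$ is, up to sign, the class of the defining short exact sequence of $\pi$-modules
\begin{equation}\label{univ:aug}
0\to I(\pi)\to \Z\pi\xrightarrow{\ \epsilon\ }\Z\to 0,
\end{equation}
where $\epsilon$ is the augmentation; equivalently, $\beta_\pi=\delta(1)$ for the connecting homomorphism $\delta\colon H^0(\pi,\Z)\to H^1(\pi,I(\pi))$. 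For $k=1$ the statement would then be immediate: given $\alpha\in H^1(\pi,L)=\operatorname{Ext}^1_{\Z\pi}(\Z,L)$ represented by an extension $0\to L\to E\to\Z\to0$, the freeness (hence projectivity) of $\Z\pi$ lets me lift $\epsilon$ to a $\pi$-map $\Z\pi\to E$ over $\Z$; its restriction is a $\pi$-homomorphism $\phi\colon I(\pi)\to L$, and by naturality of $\delta$ one gets $\phi_*(\beta_\pi)=\alpha$.

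For general $k$ I would build a partial free resolution whose $k$-th syzygy is $I(\pi)^k$. First note that $I(\pi)\subset\Z\pi$ is free abelian with basis $\{g-1\mid g\ne e\}$; hence each $I(\pi)^j$ is free abelian, in particular flat over $\Z$. Tensoring \eqref{univ:aug} over $\Z$ with $I(\pi)^j$, equipped with the diagonal $\pi$-action, keeps it exact and yields
\begin{equation}\label{univ:tensored}
0\to I(\pi)^{j+1}\to \Z\pi\otimes I(\pi)^j\to I(\pi)^j\to 0 .
\end{equation}
The middle term is a free $\Z\pi$-module: the untwisting isomorphism $g\otimes m\mapsto g\otimes g^{-1}m$ identifies $\Z\pi\otimes_\Z M$ (diagonal action) with $\Z\pi\otimes_\Z M_{\mathrm{triv}}$, which is free when $M$ is free abelian. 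Splicing \eqref{univ:tensored} for $j=0,1,\dots,k-1$ produces a length-$k$ exact sequence
\begin{equation}\label{univ:spliced}
0\to I(\pi)^k\to \Z\pi\otimes I(\pi)^{k-1}\to\cdots\to \Z\pi\otimes I(\pi)\to \Z\pi\to\Z\to 0
\end{equation}
with free middle terms, i.e. the beginning of a free resolution $P_\bullet\to\Z$ (with $P_j=\Z\pi\otimes I(\pi)^j$) whose $k$-th syzygy $\ker(P_{k-1}\to P_{k-2})$ is exactly $I(\pi)^k$.

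The final step is to identify the class of \eqref{univ:spliced} in $\operatorname{Ext}^k_{\Z\pi}(\Z,I(\pi)^k)$ with $(\beta_\pi)^k$ and then read off universality. Since Yoneda splicing of extensions computes the Yoneda product, and the Yoneda product corresponds to the cup product under the diagonal identification of tensor-product coefficients, the sequence \eqref{univ:spliced} represents precisely $\beta_\pi\smile\cdots\smile\beta_\pi=(\beta_\pi)^k$. Extending \eqref{univ:spliced} to a full free resolution $P_\bullet\to\Z$ and computing $H^k(\pi,L)=\operatorname{Ext}^k_{\Z\pi}(\Z,L)$ from it, any cocycle $f\colon P_k\to L$ vanishes on $\ker(P_k\to P_{k-1})$ and therefore factors as $f=\phi\circ d_k$ through the canonical surjection $d_k\colon P_k\twoheadrightarrow I(\pi)^k$ for a $\pi$-homomorphism $\phi\colon I(\pi)^k\to L$. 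But $\phi\circ d_k$ is also a representing cocycle for $\phi_*((\beta_\pi)^k)$, whence $\alpha=\phi_*((\beta_\pi)^k)$, as desired.

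I expect the main obstacle to be the bookkeeping in this last step: verifying that the Yoneda product of the spliced extensions really equals the cup-product power $(\beta_\pi)^k$ carrying the correct diagonal $\pi$-module structure on $I(\pi)^k$, and that the sign and orientation conventions for $\beta_\pi$ as an extension class stay consistent throughout. The freeness of $I(\pi)$ and the untwisting isomorphism are routine, but they are essential: they are what make the middle terms of \eqref{univ:spliced} projective, so that the sequence is genuinely a piece of a projective resolution rather than merely an exact sequence.
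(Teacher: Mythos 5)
Your proof is correct and is essentially the standard argument from the cited sources \cite{DR},\cite{Sch} (the paper itself states this theorem without proof): you realize $(\beta_\pi)^k$ as the class of the $k$-th syzygy $I(\pi)^k$ in the free resolution obtained by splicing the tensored augmentation sequences, so that every $k$-cocycle factors through the canonical surjection onto $I(\pi)^k$. The one step you flag as delicate --- that the Yoneda splice of the tensored extensions equals the cup power $(\beta_\pi)^k$ with the diagonal module structure --- is exactly the standard compatibility of the cup product with the composition product (equivalently, iterated connecting homomorphisms of the sequences \eqref{univ:tensored}), and it holds as you state.
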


In the paper we use notations $H^*(\Gamma,A)$ for cohomology of a group $\Gamma$ with coefficient in $\Gamma$-module $A$. The cohomology groups of a space $X$ with the fundamental group $\Gamma$ we denote as $H^*(X;A)$. Thus, $H^*(\Gamma,A)=H^*(B\Gamma;A)$ where $B\Gamma=K(\Gamma,1)$.

\section{Reduction to epimorphisms}

\begin{lemm}\label{lift}
Let $\pi\subset\Lambda$ be a subgroup,  $j:B\pi\to B\Lambda$ be a map generated by this inclusion, and $p':j^*E\Lambda\to B\pi$ be the pull-back of the universal covering $p_\Lambda:E\Lambda\to
B\Lambda$. Then $p'$ has a lift with respect to the universal covering $p_\pi:E\pi\to B\pi$.
\end{lemm}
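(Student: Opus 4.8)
The plan is to read the statement entirely through covering space theory. Since $p_\Lambda\colon E\Lambda\to B\Lambda$ is the universal covering, hence a covering map, its pullback $p'\colon j^*E\Lambda\to B\pi$ along $j$ is again a covering map, with fiber the discrete set $p_\Lambda^{-1}(*)\cong\Lambda$. A lift of $p'$ with respect to $p_\pi$ means a map $s\colon j^*E\Lambda\to E\pi$ with $p_\pi\circ s=p'$, and I would obtain it from the lifting criterion for the covering $p_\pi$: since $E\pi$ is simply connected, $(p_\pi)_*\pi_1(E\pi)=0$, so on any connected, locally path-connected subspace a lift exists precisely when $p'$ restricted there is trivial on $\pi_1$. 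As $j^*E\Lambda$ need not be connected, I would build $s$ separately on each connected component, which reduces the whole lemma to the single assertion that every component of $j^*E\Lambda$ is simply connected.

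To establish that, I would argue directly from the defining pullback square
\[
\begin{CD}
j^*E\Lambda @>\Wi{j}>> E\Lambda\\
@Vp'VV @VVp_\Lambda V\\
B\pi @>j>> B\Lambda.
\end{CD}
\]
Fix a component $C\subseteq j^*E\Lambda$. The restriction $p'|_C\colon C\to B\pi$ is a covering, so $(p'|_C)_*\colon\pi_1(C)\to\pi_1(B\pi)=\pi$ is injective. Commutativity of the square gives $j_*\circ(p'|_C)_*=(p_\Lambda)_*\circ(\Wi{j}|_C)_*$, whose right-hand side vanishes because $E\Lambda$ is simply connected. Since $j_*\colon\pi\to\Lambda$ is the inclusion of a subgroup, hence injective, this forces $(p'|_C)_*=0$; combined with its injectivity this yields $\pi_1(C)=0$. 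Thus each component is a universal cover of $B\pi$ and $p'|_C$ is equivalent to $p_\pi$.

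Granting this, the conclusion is immediate: on each $C$ the lifting criterion holds vacuously, producing $s_C\colon C\to E\pi$ with $p_\pi\circ s_C=p'|_C$, and the $s_C$ assemble into the desired lift $s$. The one step I would flag as the real content -- and the only place the hypothesis is used -- is the injectivity of $j_*$, i.e.\ that $\pi$ sits inside $\Lambda$ as a genuine subgroup; this is exactly what forces simple connectivity of the components. Conceptually the same fact is visible through monodromy: $\pi$ acts on the fiber $\Lambda$ by the restriction of the free left-translation action of $\Lambda$ on itself, so the action is free and each $\pi$-orbit, i.e.\ each component of $j^*E\Lambda$, is a copy of $E\pi$. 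Were $\pi\to\Lambda$ instead a homomorphism with nontrivial kernel $K$, the components would carry $\pi_1=K\ne 0$ and no lift could exist, which is why the argument genuinely requires $\pi\subset\Lambda$.
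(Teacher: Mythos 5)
Your proposal is correct and follows essentially the same route as the paper: show each path component $C$ of $j^*E\Lambda$ is simply connected (hence a copy of $E\pi$ over $B\pi$) and assemble the lift component by component. You actually spell out the ``easy diagram chasing'' the paper only alludes to -- injectivity of $(p'|_C)_*$ plus injectivity of $j_*$ and simple connectivity of $E\Lambda$ -- and your remark on local path-connectedness plays the same role as the paper's appeal to a CW structure to ensure components are open.
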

\begin{proof}
Clearly, for each path component $C$ of $j^*E\Lambda$  the restriction
$p'|_C:C\to B\pi$  of a covering $p'$ is a covering. Easy diagram chasing shows that $C$
 is simply connected. Hence, each path component $C$ of $j^*E\Lambda$ is homeomorphic to $E\pi$ and the restriction
$p'|_C:C\to B\pi$ is the universal covering. We may assume that $j^*E\Lambda$ is a CW complex. This would imply that each path component $C$ is open.
Thus, the lift $j^*E\Lambda\to E\pi$ can be defined independently on each path component.
\end{proof}

Given a homomorphism $\phi:\Gamma\to\Lambda$, by $\phi':\Gamma\to im(\phi)$ we denote the restriction of $\phi$ from the codomain to its range. The following theorem
is a generalization of Theorem 6.11 from~\cite{Sc}.
\begin{thm}\label{cat of mono}
For any group homomorphism $\phi:\Gamma\to\Lambda$, $\cat\phi=\cat\phi'$ .
\end{thm}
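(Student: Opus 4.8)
The plan is to prove the two inequalities separately, the inequality $\cat\phi\le\cat\phi'$ being immediate and the reverse inequality $\cat\phi'\le\cat\phi$ being the one that uses Lemma~\ref{lift}. Write $\pi=\phi(\Gamma)$ and factor the classifying map as $f=j\circ f'$, where $f':B\Gamma\to B\pi$ induces $\phi'$ and $j:B\pi\to B\Lambda$ induces the inclusion $i:\pi\hookrightarrow\Lambda$. For the easy direction, if $U\subset B\Gamma$ is an open set on which $f'$ is nullhomotopic, then $f|_U=j\circ f'|_U$ is nullhomotopic as well; hence any cover witnessing $\cat\phi'\le n$ witnesses $\cat\phi\le n$, giving $\cat\phi\le\cat\phi'$.

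For the reverse inequality I would pass to the Ganea--Schwarz picture of Theorem~\ref{cat map}. Suppose $\cat\phi=n$, so that $f$ admits a lift through the Ganea fibration $p_n^{B\Lambda}:G_n(B\Lambda)\to B\Lambda$. Since lifting $j\circ f'$ through $p_n^{B\Lambda}$ is equivalent to lifting $f'$ through the pullback fibration $j^*p_n^{B\Lambda}:j^*G_n(B\Lambda)\to B\pi$, I obtain a lift $g:B\Gamma\to j^*G_n(B\Lambda)$ of $f'$. Because pullback commutes with the fiberwise join, there is an identification $j^*G_n(B\Lambda)=\ast_{B\pi}^{n+1}(j^*G_0(B\Lambda))$ of fibrations over $B\pi$, which reduces the problem to comparing $j^*G_0(B\Lambda)$ with $G_0(B\pi)$ fiberwise.

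The key step is to build a fiberwise map $\psi:j^*G_0(B\Lambda)\to G_0(B\pi)$ over $B\pi$. Here I would use that $p_0^{B\Lambda}:G_0(B\Lambda)\to B\Lambda$ is fiber homotopy equivalent to the universal covering $p_\Lambda:E\Lambda\to B\Lambda$ (both have contractible total space and fiber weakly equivalent to $\Lambda$), so that after pulling back $j^*G_0(B\Lambda)\simeq j^*E\Lambda$ over $B\pi$. Now Lemma~\ref{lift} supplies a lift $j^*E\Lambda\to E\pi$ over $B\pi$, and composing with the fiber homotopy equivalence $E\pi\simeq G_0(B\pi)$ yields $\psi$. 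Applying the fiberwise join functorially then produces a fiberwise map $\ast_{B\pi}^{n+1}\psi:j^*G_n(B\Lambda)\to G_n(B\pi)$ over $B\pi$, and composing it with $g$ gives a lift of $f'$ through $p_n^{B\pi}$. By Theorem~\ref{cat map} this means $\cat\phi'=\cat f'\le n$, completing the proof.

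The main obstacle is precisely this middle step: correctly passing between Ganea's path-space model $G_0$, whose fiber is $\Omega B\Lambda$, and the covering-space model $E\Lambda$, whose fiber is discrete, and then checking that the lift furnished by Lemma~\ref{lift} genuinely promotes to an honest fiberwise map of the two Ganea fibrations over $B\pi$, not merely a fiber homotopy class. Everything else, namely the correspondence between lifts of $j\circ f'$ and lifts through the pullback, and the commutation of pullback with the fiberwise join, is formal naturality that I expect to dispatch quickly.
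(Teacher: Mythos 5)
Your proposal is correct and follows essentially the same route as the paper: both reduce to the Ganea--Schwarz criterion, replace the Ganea fibration by the fiberwise join of universal coverings via the fiber homotopy equivalence $G_0(B\Lambda)\simeq E\Lambda$, pull back along $j$, and invoke Lemma~\ref{lift} to produce the fiberwise map into $\ast^{n+1}_{B\pi}E\pi$. The only cosmetic difference is that the paper passes to the covering model before pulling back, whereas you pull back first; the "main obstacle" you flag is handled in the paper exactly as you anticipate, by working with the covering model throughout so that Lemma~\ref{lift} supplies an honest fiberwise lift.
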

\begin{proof}
Clearly, $\cat\phi\le\cat\phi'$. We show that  $\cat\phi\ge\cat\phi'$. Let $\cat\phi=k$. This means that there is a lift of the corresponding map $f:B\Gamma\to B\Lambda$ with respect to the Ganea fibration $p_k:G_k(B\Lambda)\to B\Lambda$.  Since the path fibration $p:P_0(B\pi)\to B\pi$ is fiber-wise homotopy equivalent to the universal covering $E\pi\to B\pi$ for any discrete group $\pi$, the $k$-th Ganea fibration $G_k(B\pi)\to B\pi$ is fiberwise homotopy equivalent to the  iterated  fiberwise join $*^{k+1}_{B\pi}E\pi$ of the universal covering.
Then $f$ admits a lift with respect to 
$$*^{k+1}_{B\Lambda}p_\Lambda:*^{k+1}_{B\Lambda}E\Lambda\to B\Lambda.$$ 
The map $f$ factors as $f=j\circ f'$ where $f':B\Gamma\to B\pi$, $j:B\pi\to B\Lambda$ and
$\pi=im(\phi)$. Hence, the map $f'$ admits a lift to the pull-back $$\bar f':B\Gamma\to j^*(*^{k+1}_{B\Lambda}E\Lambda)$$
By Lemma~\ref{lift} there is a lift $s$ of $*^{k+1}_{B\pi}p':j^*(*^{k+1}_{B\Lambda}E\Lambda)\to B\pi$ with respect to $$*^{k+1}_{B\pi}p_\pi:*^{k+1}_{B\pi}E\pi\to B\pi.$$
Then the composition $s\circ \bar f'$ is a lift of $f'$ with respect to $*^{k+1}_{B\pi}p_\pi$. By Theorem~\ref{cat map} $\cat f'\le k$ and, hence, $\cat\phi'\le k$.
\end{proof}
\begin{thm}\label{cd of mono}
For any group homomorphism $\phi:\Gamma\to\Lambda$ and $\phi':\Gamma\to im(\phi)=\pi$ as above
$$\cd(\phi)=\cd(\phi').$$
\end{thm}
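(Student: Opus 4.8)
The plan is to use the factorization $\phi = \iota\circ\phi'$, where $\iota:\pi=\operatorname{im}(\phi)\hookrightarrow\Lambda$ is the inclusion, so that on cohomology $\phi^*=(\phi')^*\circ\iota^*$. The inequality $\cd(\phi)\le\cd(\phi')$ is then immediate: if $M$ is a $\Lambda$-module and $\alpha\in H^k(\Lambda,M)$ has $\phi^*(\alpha)\ne 0$, then, regarding $M$ as a $\pi$-module via $\iota$, the class $\iota^*(\alpha)\in H^k(\pi,M)$ satisfies $(\phi')^*(\iota^*(\alpha))=\phi^*(\alpha)\ne 0$, which witnesses $\cd(\phi')\ge k$.

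For the reverse inequality $\cd(\phi)\ge\cd(\phi')$ I would pass from a $\pi$-module to a $\Lambda$-module by coinduction and invoke Shapiro's lemma. Suppose $\cd(\phi')=k$ is realized by a $\pi$-module $N$, i.e. $(\phi')^*:H^k(\pi,N)\to H^k(\Gamma,N)$ is nonzero, and set $M=\Coind_\pi^\Lambda N=\operatorname{Hom}_{\Z\pi}(\Z\Lambda,N)$, a $\Lambda$-module. Shapiro's lemma gives an isomorphism $H^k(\Lambda,M)\xrightarrow{\cong}H^k(\pi,N)$ realized concretely as $\epsilon_*\circ\iota^*$, where $\epsilon:M\to N$, $f\mapsto f(e)$, is the $\pi$-equivariant evaluation counit. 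The crucial structural input is the commutative square
\[
\begin{CD}
H^k(\pi, M) @>{\epsilon_*}>> H^k(\pi, N) \\
@V{(\phi')^*}VV @VV{(\phi')^*}V \\
H^k(\Gamma, M) @>{\epsilon_*}>> H^k(\Gamma, N)
\end{CD}
\]
whose commutativity is the naturality of the induced map $(\phi')^*$ with respect to the coefficient homomorphism $\epsilon$, now viewed $\Gamma$-equivariantly via $\phi'$ (consistent since $\phi=\iota\circ\phi'$).

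The proof then finishes with a diagram chase. I would choose $\xi\in H^k(\pi,N)$ with $(\phi')^*(\xi)\ne 0$ and let $\alpha\in H^k(\Lambda,M)$ be its preimage under the Shapiro isomorphism, so that $\epsilon_*(\iota^*\alpha)=\xi$. Using $\phi^*=(\phi')^*\iota^*$ together with the square above,
\[
\epsilon_*\big(\phi^*(\alpha)\big)=\epsilon_*\big((\phi')^*(\iota^*\alpha)\big)=(\phi')^*\big(\epsilon_*(\iota^*\alpha)\big)=(\phi')^*(\xi)\ne 0 .
\]
Hence $\phi^*(\alpha)\ne 0$ in $H^k(\Gamma,M)$, so the $\Lambda$-module $M$ exhibits $\cd(\phi)\ge k$; combined with the easy direction this yields $\cd(\phi)=\cd(\phi')$.

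The step I expect to require the most care is not conceptual but bookkeeping: justifying that Shapiro's isomorphism is genuinely given by $\epsilon_*\circ\iota^*$ (so that it is compatible with restriction of scalars along $\iota$), and that $\epsilon$ stays equivariant after the $\Lambda$-action is restricted all the way down to $\Gamma$ through $\phi=\iota\circ\phi'$. Once the naturality square is in place the rest is formal, and no failure can occur from $M$ being ``too small,'' since coinduction is exact and Shapiro's isomorphism is natural in $N$.
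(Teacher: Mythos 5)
Your argument is correct and is essentially the same as the paper's: both directions use the factorization $\phi=\iota\circ\phi'$, and the nontrivial inequality is obtained by passing to the coinduced module $\Coind^\Lambda_\pi N$ and using that the Shapiro isomorphism is realized as restriction followed by the evaluation coefficient map, together with the naturality square for $(\phi')^*$. The only difference is cosmetic (you chase a single class $\xi$ where the paper argues that the composite homomorphism is nonzero), so there is nothing to add.
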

\begin{proof}
Clearly, $\cd(\phi')\ge\cd(\phi)$. We show that $\cd(\phi')\le\cd(\phi)$.  Let $\cd(\phi')=k$. Then $\phi'^*:H^k(\pi,M)\to H^k(\Gamma,M)$ is not zero for some $\pi$-module $M$.
Let $$\alpha:\Coind^{\Lambda}_\pi M=Hom_\pi(\Z\Lambda,M)\to M$$ denote the canonical $\pi$-homomorphism, defined for $f:\Lambda\to M$ as $\alpha(f)=f(1)$.
Consider the commutative diagram
$$
\begin{CD}
H^k(\Lambda, \Coind^{\Lambda}_{\pi} M) @>j^*>>H^k(\pi,\Coind^{\Lambda}_{\pi} M) @>\alpha^*>> H^k(\pi,M)\\
@. @V\phi'^*VV @V\phi'^*VV\\
@. H^k(\Gamma,\Coind^{\Lambda}_\pi M) @>\alpha^*>> H^k(\Gamma,M)\\
\end{CD}
$$
where $\alpha^*$ is the coefficient homomorphism generated by $\alpha$.
By Shapiro Lemma~\cite{Br} the top row through homomorphism is an isomorphism. Therefore, the homomorphism
$$
\phi^*=\phi'^*j^*:H^k(\Lambda, \Coind^\Lambda_\pi M)\to H^k(\Gamma,\Coind^\Lambda_\pi M)$$ is nonzero. Hence, $\cd(\phi)\ge k$.
\end{proof}
Theorem~\ref{cat of mono} and Theorem~\ref{cd of mono} imply the following:
\begin{cor}
Suppose that Conjecture~\ref{con} holds true for all epimorphisms $\phi:\Gamma\to\pi$ for some class of groups. Then it holds for all homomorphisms for groups from that class.
\end{cor}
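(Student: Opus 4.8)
The plan is to read the statement off directly from Theorem~\ref{cat of mono} and Theorem~\ref{cd of mono} by factoring an arbitrary homomorphism through its image. Given $\phi:\Gamma\to\Lambda$ with $\Gamma$ and $\Lambda$ in the prescribed class, I would set $\pi=im(\phi)$ and write $\phi=j\circ\phi'$, where $\phi':\Gamma\to\pi$ is the corestriction of $\phi$ to its image and $j:\pi\hookrightarrow\Lambda$ is the inclusion. By construction $\phi'$ is an epimorphism, so it is exactly the kind of map to which the assumed case of Conjecture~\ref{con} applies.

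With this factorization in hand the argument is a short chain of equalities. Theorem~\ref{cat of mono} gives $\cat\phi=\cat\phi'$ and Theorem~\ref{cd of mono} gives $\cd(\phi)=\cd(\phi')$, so the assertion $\cat\phi=\cd(\phi)$ for $\phi$ is equivalent to the assertion $\cat\phi'=\cd(\phi')$ for the epimorphism $\phi'$. The latter is precisely the hypothesis, and combining the relations yields $\cat\phi=\cat\phi'=\cd(\phi')=\cd(\phi)$, as required.

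The only step that is not purely formal, and hence the main point to verify, is that the epimorphism $\phi'$ genuinely lies in the scope of the hypothesis, i.e.\ that its codomain $\pi=im(\phi)$ again belongs to the class. Since $\pi$ is a subgroup of $\Lambda$, this is guaranteed whenever the class is closed under passage to subgroups. For the finitely generated torsion free nilpotent groups that motivate this paper such closure is automatic --- every subgroup of a finitely generated torsion free nilpotent group is itself finitely generated torsion free nilpotent --- so the reduction to the epimorphism case carries through with no additional assumptions.
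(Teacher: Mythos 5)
Your proof is correct and is exactly the paper's argument: factor $\phi$ through its image as $j\circ\phi'$ and apply Theorem~\ref{cat of mono} and Theorem~\ref{cd of mono} to get $\cat\phi=\cat\phi'$ and $\cd(\phi)=\cd(\phi')$. Your added remark that the image $\pi$ must again lie in the class (automatic for finitely generated torsion free nilpotent groups) is a point the paper leaves implicit, and it is a worthwhile clarification.
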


\section{Homomorphisms of nilpotent groups}

\begin{lemm}\label{nilbundle}
 Let $\Gamma$ be $\pi$ are finitely generated, torsion free nilpotent groups. Then every epimorphism $\phi:\Gamma\rightarrow \pi$ can be realized as a locally trivial bundle of nilmanifolds with the fiber a nilmanifold.
\end{lemm}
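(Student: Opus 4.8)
The plan is to realize the algebraic short exact sequence geometrically through Mal'cev completions and then quotient by lattices. First I would set $K=\ker\phi$. Since subgroups of finitely generated torsion free nilpotent groups are again finitely generated, torsion free and nilpotent, the kernel $K$ is of the same type, and we obtain a short exact sequence $1\to K\to\Gamma\xrightarrow{\phi}\pi\to 1$. By the Mal'cev Theorem each of $\Gamma$, $\pi$, $K$ is a cocompact lattice in its Mal'cev completion $G_\Gamma$, $G_\pi$, $G_K$, a connected simply connected nilpotent Lie group. Because the Mal'cev completion is functorial, $\phi$ extends uniquely to a Lie group homomorphism $\bar\phi:G_\Gamma\to G_\pi$ with $\bar\phi|_\Gamma=\phi$.

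Next I would check that $\bar\phi$ is surjective with kernel naturally identified with $G_K$. Surjectivity holds because $\bar\phi(G_\Gamma)$ is a closed connected subgroup containing the lattice $\phi(\Gamma)=\pi$, hence contains the Mal'cev completion $G_\pi$ of $\pi$. For the kernel, the inclusion $K\hookrightarrow\Gamma$ induces an injection $G_K\hookrightarrow G_\Gamma$ landing inside $\ker\bar\phi$; comparing dimensions via additivity of the Hirsch length, $\dim G_\Gamma=h(\Gamma)=h(K)+h(\pi)=\dim G_K+\dim G_\pi$, forces $G_K=\ker\bar\phi$. In particular $G_K\cap\Gamma=K$, so $K$ is a cocompact lattice in $G_K$ and $F=G_K/K$ is a nilmanifold.

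Then I would pass to the quotients. A surjective homomorphism of simply connected nilpotent Lie groups is a locally trivial principal $G_K$-bundle $\bar\phi:G_\Gamma\to G_\pi$. Since $K\subset G_K$ acts on the right along the fibers, the quotient $\bar\phi_K:G_\Gamma/K\to G_\pi$ is a locally trivial bundle with fiber $F=G_K/K$: over any lift $\tilde V\subset G_\pi$ of an evenly covered set one has $\bar\phi_K^{-1}(\tilde V)\cong\tilde V\times F$. The map $\bar\phi_K$ is equivariant for the residual right action of $\pi=\Gamma/K$ on $G_\Gamma/K$ and the right action of $\pi$ on $G_\pi$. Passing to the free properly discontinuous $\pi$-quotients yields the induced map $f:N_\Gamma=G_\Gamma/\Gamma\to G_\pi/\pi=N_\pi$ between nilmanifolds, and choosing the evenly covered $V\subset N_\pi$ small enough that its $\pi$-translates in $G_\pi$ are disjoint transports the trivialization downstairs, giving $f^{-1}(V)\cong V\times F$. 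Thus $f$ is a locally trivial nilmanifold bundle with nilmanifold fiber $F$, and since $\bar\phi|_\Gamma=\phi$ the map $f$ induces $\phi$ on fundamental groups.

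The main obstacle is the identification of the fiber: one must ensure that $K$ sits as a genuine cocompact lattice inside the connected subgroup $\ker\bar\phi$, equivalently that $\ker\bar\phi$ is rational with respect to $\Gamma$ and that $G_K/K$ is compact. This is exactly where functoriality of the Mal'cev completion together with additivity of the Hirsch length is essential; without the dimension count one only obtains $G_K\subseteq\ker\bar\phi$ and an open, possibly noncompact, fiber.
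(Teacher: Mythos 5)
Your proof is correct, but it takes a genuinely different route from the paper's. The paper argues by induction on $\nil(\Gamma)+\nil(\pi)$: it splits the center as $Z(\Gamma)\cong B\oplus A$ with $B=Z(\Gamma)\cap\Ker\phi$, first kills $B$ via the torus bundle $B\Gamma\to B(\Gamma/B)$, passes to the induced epimorphism $\Gamma/Z(\Gamma)\to\pi/\bar A$ of smaller total nilpotency class, and then recovers the bundle for $\phi$ by pulling back the inductively constructed bundle and taking the finite cover corresponding to the finite-index inclusion $\Gamma/B\hookrightarrow\Lambda=\bar\phi^{*}\pi$. You instead realize the whole extension $1\to K\to\Gamma\to\pi\to 1$ at once inside the Mal'cev completions, identify $\ker\bar\phi$ with $G_K$ by functoriality plus the Hirsch-length count, and descend the principal $G_K$-bundle $G_\Gamma\to G_\pi$ through the lattice quotients. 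Your approach is shorter and more transparent, and it identifies the fiber concretely as the nilmanifold $G_K/K$ with $K=\ker\phi$; the price is heavier input from the lattice theory of nilpotent Lie groups (functoriality and uniqueness of the completion, the fact that $K$ is a lattice in the smallest closed connected subgroup containing it, closedness of images, additivity of Hirsch length), whereas the paper's induction needs only the existence half of Mal'cev's theorem together with elementary facts about centers. Two small points you should make explicit if you write this up: to conclude $G_K=\ker\bar\phi$ from the containment and the dimension count you also need $\ker\bar\phi$ to be connected, which holds here because $\exp$ is a diffeomorphism for simply connected nilpotent Lie groups and hence $\ker\bar\phi=\exp(\ker d\bar\phi)$; and the injectivity of the induced map $G_K\to G_\Gamma$ is best justified by quoting the standard theorem that a subgroup of a lattice is a lattice in the smallest closed connected subgroup containing it, which is then the Mal'cev completion by uniqueness.
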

\begin{proof}
We prove it by induction on $n=s+t$ where $s=\nil(\Gamma)$ and $t=\nil(\pi)$ are the nilpotency classes of $\Gamma$ and $\pi$.
The base of induction is the case of abelian groups where any epimorphism $\phi:\Z^{k+\ell}\to\Z^k$ is  the projection onto a factor, since it is a split surjection. Clearly, the projection $\phi$ can be realized as
a trivial fiber bundle of tori $T^{k+\ell}\to T^k$ with the fiber a torus $T^\ell$.

Suppose that $n>2$ and the statement of the lemma holds true for $s+t<n$. We denote by  $B= Z(\Gamma)\cap \Ker\phi$ and by $A=Z(\Gamma)/B$, where $Z(\Gamma)$ is the center of $\Gamma$. Note that $B$ is a direct summand in $Z(\Gamma)$ or, equivalently, $A$ is free. For the later we claim that if $z^n\in B$, then $z\in B$. Indeed,  If $z^n\in\Ker\phi$ then $z\in\Ker\phi$, since $\pi$ is torsion free. If $z^n\in Z(\Gamma)$ then $z\in Z(\Gamma)$ by 
Proposition~\ref{nil}. Thus, $Z(\Gamma)\cong B\oplus A$. Let $\bar A$ be the direct summand of $Z(\pi)$ that contains $\phi(Z(\Gamma))\cong A$ as a finite index subgroup. 
In view of Corollary~\ref{nil2}, $\pi/\bar A$ is a torsion free nilpotent group. 

We consider the nilpotent group $\Gamma'=\Gamma/B$ and the epimorphism  $\phi':\Gamma'\to\pi$.
induced by $\phi$. In view of the principal fiber bundle $B\Gamma\to B\Gamma'$ with the fiber a torus, it suffices to show that $\phi'$ can be realized as a fiber bundle of nilmanifolds.
We consider the commutative diagram
$$
\begin{CD}
\Gamma'@>>> \Gamma'/A\\
@V\phi'VV @V\bar\phi VV\\
\pi @>>> \pi/\bar A.\\
\end{CD}
$$
Note that $\Gamma'/A=\Gamma/Z(\Gamma)$ and, hence, $\bar\phi$ is an epimorphism with $\nil(\Gamma'/A)+\nil(\pi/\bar A)<s+t.$
The homomorphism $\phi'$ factors through the pull-back $\Lambda=\bar\phi^*\pi$, $\phi'=\bar\phi'\circ\xi$ with respect to $\bar\phi$.
Since $\phi'|_A:A\to\bar A$ is an embedding of a finite index subgroup, the homomoprhism $\xi:\Gamma'\to\Lambda$
 in the commutative diagram of short exact sequences generated by $\phi'$ and the pull-back
$$
\begin{CD}
1 @>>> A @>>> \Gamma' @>>> \Gamma/Z(\Gamma) @>>>1\\
@. @VV\subset V @V\xi V\subset V @VV= V @.\\
1 @>>>\bar A @>>> \Lambda @>>> \Gamma/Z(\Gamma) @>>>1\\
@. @VV= V @V\bar\phi'VV @V\bar\phi VV @.\\
1 @>>>\bar A @>>> \pi @>>> \pi/\bar A @>>>1\\
\end{CD}
$$
is an embedding of a finite index subgroup. 

By induction assumption  applied to $\bar\phi$
there is a fiber bundle between nilmanifolds
$$\bar f:B(\Gamma/Z(\Gamma)) \to B(\pi/\bar A)$$ with the fiber a nilmanifold $F$.
Consider the pull-back diagram
$$
\begin{CD}
  B\Lambda @>p'>> B(\Gamma/Z(\Gamma)) \\
 @V\bar f'VV @V\bar f VV \\
 B\pi @>p>> B(\pi/\bar A).\\
\end{CD}
$$
Let $q:M\to B\Lambda$ be a covering that corresponds to the subgroup $\xi(\Gamma')\subset\Lambda=\pi_1(B\Lambda)$.
Since $\xi(\Gamma')$ is of finite index, $M$ is a closed aspherical manifold with $\pi_1(M)=\Gamma'$. Thus, the composition $f'=\bar f'\circ q:M\to B\pi$ realizes the homomorphism $\phi'$ as a fiber bundle.
The fiber  $F'$ of  $f'$ is homeomorphic to the total space of a fiber bundle  $F'\to F$ with a finite fiber. The homotopy exact sequence of the fibration
$F'\to B\Gamma'\stackrel{f'}\to B\pi$ and the fact that $\phi'$ is surjective imply that $F'$ is connected. Hence, $F'$ is a nilmanifold.
\end{proof}

\begin{lemm}\label{compact support} 
For every locally trivial bundle of closed aspherical manifolds  $f:M^{m}\rightarrow N^{n}$ with compact connected fiber $F$ 
the induced homomorphism  $$f^{*}:H^{n}(N;\Z\pi) \rightarrow H^{n}(M;\Z\pi)$$ is nonzero where $\pi=\pi_1(N)$.
\end{lemm}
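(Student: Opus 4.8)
The plan is to reinterpret both cohomology groups as compactly supported cohomology of covering spaces and then to exploit the product structure of the pulled-back bundle over a contractible base.

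First I would recall that $N=B\pi$ is a closed aspherical $n$-manifold, hence a finite $K(\pi,1)$, so that $H^*(N;\Z\pi)=H^*(\pi;\Z\pi)\cong H^*_c(\widetilde N;\Z)$, where $\widetilde N=E\pi$ is the universal cover. Since the fiber $F$ is connected, the homotopy exact sequence of the bundle shows that $f_\#:\pi_1(M)\to\pi$ is onto, with kernel $K\cong\pi_1(F)$; let $\widehat M\to M$ be the regular $\pi$-cover corresponding to $K$. As the coefficient module $\Z\pi$ is regarded as a $\pi_1(M)$-module through $f_\#$, a chain-level computation (using that $C_*(\widetilde M)$ is finitely generated free over $\Z\pi_1(M)$, since $M$ is a finite complex) identifies $H^*(M;\Z\pi)$ with $H^*_c(\widehat M;\Z)$. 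Moreover $\widehat M$ is exactly the pullback $f^*\widetilde N$, the lift $\widehat f:\widehat M\to\widetilde N$ of $f$ is a proper fiber bundle with fiber $F$, and under the two identifications the homomorphism $f^*$ becomes $\widehat f^*:H^*_c(\widetilde N;\Z)\to H^*_c(\widehat M;\Z)$. Thus it suffices to show that $\widehat f^*$ is nonzero in degree $n$.

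Next I would compute both sides. The space $\widetilde N$ is a contractible, hence oriented, open $n$-manifold, so Poincar\'e duality gives $H^k_c(\widetilde N;\Z)\cong H_{n-k}(\widetilde N;\Z)$, which is $\Z$ for $k=n$ and $0$ otherwise. Since $\widetilde N$ is contractible and paracompact, the bundle $\widehat f$ is trivial (fiber bundles over a contractible paracompact base are trivial), so I may fix a homeomorphism $\widehat M\cong\widetilde N\times F$ under which $\widehat f$ becomes the projection. The K\"unneth theorem for compactly supported cohomology (applicable since $F$ is compact, so that $H^*_c(F)=H^*(F)$ is finitely generated, and $H^*_c(\widetilde N)$ is free) then yields $H^n_c(\widetilde N\times F;\Z)\cong H^n_c(\widetilde N;\Z)\otimes H^0(F;\Z)\cong\Z$, since $H^{<n}_c(\widetilde N)=0$ annihilates all other summands and all Tor terms. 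Under this isomorphism $\widehat f^*$ sends a generator $\alpha$ to the cross product $\alpha\times 1$, which is again a generator; hence $\widehat f^*$ is an isomorphism in degree $n$, and in particular nonzero.

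The main obstacle is the passage between group cohomology with $\Z\pi$ coefficients and compactly supported cohomology of the covers, together with the verification that $f^*$ corresponds to $\widehat f^*$. This step requires care because compactly supported cohomology is \emph{not} a homotopy invariant, so it is essential to use the honest bundle triviality $\widehat M\cong\widetilde N\times F$ over the contractible base rather than merely the homotopy equivalence $\widehat M\simeq F$. I would also note that no orientability hypothesis on $F$ (or $M$) is needed: the computation only invokes $H^0(F)=\Z$ together with the orientability of the simply connected manifold $\widetilde N$. Once these identifications are in place, the remaining steps are the standard duality computation $H^n_c(\widetilde N)\cong\Z$ and a single application of K\"unneth.
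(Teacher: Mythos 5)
Your proposal is correct and follows essentially the same route as the paper: both identify $H^n(N;\Z\pi)$ with $H^n_c(\widetilde N;\Z)$, pass to the pullback cover $f^*\widetilde N\cong\widetilde N\times F$ of $M$, and use compactness of $F$ together with triviality of the bundle over the contractible base to conclude that the induced map on compactly supported cohomology is injective. The only differences are in execution: where you invoke the full identification $H^*(M;\Z\pi)\cong H^*_c(f^*\widetilde N;\Z)$ and a compactly supported K\"unneth formula, the paper sidesteps the former by constructing only a cochain-level one-sided inverse $\Phi$ to $p_2^*$ (precisely the ``care'' your identification step would require, and all that is needed since one only wants nonvanishing), and it replaces K\"unneth by the proper section $\widetilde N\to\widetilde N\times F$.
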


\begin{proof}
 Consider the pull-back diagram:

\tikzset{node distance=2cm, auto}

\begin{tikzcd}[scale = 2]
\Wi{M} \arrow[r, "p_{2}", dashed] \arrow["p_{M}",dr] &
f^{*}{\Wi{N}} \arrow[d, "p_{1}"] \arrow[r,"\bar{f}"] & \Wi{N} \arrow[d, "p_{N}"]\\
& M \arrow[r,"f"] & N
\end{tikzcd}
\\  where $p_{M}$ and $p_{N}$ are he universal covering maps.
Since $f^{*}(\Wi{N})$ is a covering of $M$ and $\Wi{M}$ is the universal covering,  $p_M$ factors through covering maps $p_{2}$ and $p_1$.

We recall~\cite[Lemma 7.4]{Br} that for every left $\pi$-module $M$ there is a natural isomorphism of right modules $$\Theta:Hom_\pi(M,\Z\pi)\to Hom_c(M,\Z)$$ defined by the formula
 $F\mapsto f_1$ where $F:M\to\Z\pi$, $F(m)=\sum_{\gamma\in\pi} f_\gamma(m)\gamma$.
We consider the following commutative diagrams of cochain groups:

\begin{tikzcd}[scale = 2]
Hom_{\Gamma}(C_{*}(\Wi{M}),\Z\pi) \arrow[dr, "\Psi", dashed]\\
Hom_{\pi}(C_{*}(X),\Z\pi)  \arrow[r,"\Theta"] \arrow[u, "p_2^*"] & Hom_{c}(C_{*}(X),\Z)\\
Hom_{\pi}(C_{*}(\Wi{N}),\Z\pi) \arrow[r,"\Theta"] \arrow[u, "(\bar{f})^{\ast}"] & Hom_{c}(C_{*}(\Wi{N}),\Z) \arrow[u, "(\bar{f}_c)^{\ast}"]
\end{tikzcd}
\\ 
where $X=f^*\Wi N$, $\Psi=\Theta\circ\Phi$, and  $$\Phi:Hom_{\Gamma}(C_{*}(\Wi{M}),\Z\pi) \to Hom_{\pi}(C_{*}(X),\Z\pi)$$ is defined as follows. 
For each simplex $\sigma$ in $X$ we fix a lift $\tilde\sigma$  in $\Wi{M}$. 
We define $$\Phi(F)(\sigma)=F(\Wi{\sigma})$$ for $F\in Hom_\Gamma(C_*(\Wi M),\Z\pi)$ and simplex $\sigma$ in $X$ . 

{\em Claim 1.} $\Phi(F)\in Hom_\pi(C_*(X),\Z\pi)$.

Proof. Since the fiber $F$ is connected, it follows that $f$ induces an epimorphism of the fundamental groups.
Let $g\in\pi$ and let $f_*(\bar g)=g$. Then  $\Wi{g\sigma}$ and $\bar g\tilde\sigma$ both cover $g\sigma$. Hence there is $\gamma\in Ker f_*$ such that $\gamma(\Wi{g\sigma})=\bar g\tilde\sigma$. Then $\Phi$ is $\pi$-equivariant:
$$\Phi(F)(g\sigma)=F(\Wi{g\sigma})=F(\gamma\Wi{g\sigma})=F(\bar g\tilde\sigma)=gF(\tilde\sigma)=g\Phi(F)(\sigma).$$

{\em Claim 2.} $\Phi\circ p_2^* =1$.

Proof. $\Phi\circ p_2^*(F')(\sigma)=\Phi(p_2^*(F'))(\sigma)=(p_2^*F')(\tilde\sigma)=F'(p_2(\tilde\sigma))=F'(\sigma)$.

Since $\Wi{N}$ is contractible, the bottom row in the diagram above gives us an isomorphism of the cohomology groups $\Theta^*:H^*(N;\Z\pi)\to H^*_c(\Wi N;\Z)$.
The commutative diagram on the cochain level produces the commutative diagram for  cohomology:

\begin{tikzcd}
H^{n}(M;\Z\pi) \arrow[r, "\Psi^{*}"]
& H_{c}^{n}(X;\Z) \\
H^{n}(N;\Z\pi) \arrow[r, "\Theta^{*}" ] \arrow[u,"f_c^{*}"]
 & H_{c}^{n}(\Wi{N}; \Z)=\Z \arrow[u, "(\bar{f})^{*}"].
\end{tikzcd}

Since $\Wi{N}$ is contractible, $X=f^{*}(\Wi{N})$ is a trivial fiber bundle with the fiber $F$. 
Since $X\cong\Wi{N}\times F$ admits a proper retraction onto $\Wi{N}$,  we obtain that $(\bar{f_c})^{*}$ is a monomorphism.
Hence, the homomorphism $f^{*}:\Z=H^n(N;\Z\pi)\to H^n(M;\Z\pi)$ is not trivial.
\end{proof}

\begin{thm}
For a homomorphism $\phi:\Gamma\to\pi$ of finitely generated torsion free nilpotent groups $\cat\phi=\cd(\phi)$.
\end{thm}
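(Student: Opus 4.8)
The plan is to reduce to the case of an epimorphism and then trap both $\cat\phi$ and $\cd(\phi)$ between $\cd(\pi)$ and $\cat\pi$, which agree by the Eilenberg--Ganea equality. To set up the reduction, note that a subgroup of a finitely generated torsion free nilpotent group is again finitely generated, torsion free, and nilpotent, so the image $im(\phi)$ stays within our class. Hence Theorems~\ref{cat of mono} and~\ref{cd of mono} allow me to replace $\phi$ by the epimorphism $\phi':\Gamma\to im(\phi)$ without changing either $\cat$ or $\cd$. From now on I assume $\phi:\Gamma\to\pi$ is onto and set $n=\cd(\pi)$; since $\pi$ is finitely generated, torsion free, and nilpotent, I may take $B\pi$ to be a closed nilmanifold of dimension $n$, and $\cat\pi=\cd\pi=n$.

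The two upper bounds are formal. Because $H^k(\pi,M)=0$ for every coefficient module $M$ once $k>\cd(\pi)$, the definition of $\cd(\phi)$ forces $\cd(\phi)\le n$. For the category, I use the general inequality $\cat f\le\cat Y$ for a map $f:X\to Y$: pulling back a categorical open cover of $Y$ gives an open cover of $X$ on each piece of which $f$ factors through a subset contractible in $Y$, hence is nullhomotopic. Applied to the map $f:B\Gamma\to B\pi$ inducing $\phi$, this yields $\cat\phi\le\cat\pi=n$.

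The substance of the proof is the reverse estimate $\cd(\phi)\ge n$. Here I invoke Lemma~\ref{nilbundle} to realize $\phi$ as a locally trivial bundle $f:B\Gamma\to B\pi$ of closed nilmanifolds whose fiber $F$ is a nilmanifold, in particular compact and connected. Lemma~\ref{compact support}, applied with the coefficient module $\Z\pi$, then shows that $f^*=\phi^*:H^n(\pi;\Z\pi)\to H^n(\Gamma;\Z\pi)$ is nonzero. Taking $M=\Z\pi$ in the definition of $\cd(\phi)$ gives $\cd(\phi)\ge n$.

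Assembling the inequalities, together with the general bound $\cd(\phi)\le\cat\phi$ from the introduction, produces the chain $n=\cd(\pi)\le\cd(\phi)\le\cat\phi\le\cat\pi=n$, so all four numbers coincide and $\cat\phi=\cd(\phi)$. The step carrying the real weight is the lower bound, that is, the pairing of Lemmas~\ref{nilbundle} and~\ref{compact support}: the sandwiching is soft, but the nonvanishing of $\phi^*$ on top-dimensional cohomology with group-ring coefficients is exactly where the fiber-bundle (nilpotent) structure is used. The points I would check most carefully are that $im(\phi)$ remains in the admissible class---so that the reduction to epimorphisms is legitimate---and that the realizing bundle has connected fiber, a hypothesis Lemma~\ref{compact support} requires and which is already secured in Lemma~\ref{nilbundle}.
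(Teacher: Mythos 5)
Your proof is correct and follows essentially the same route as the paper: reduction to an epimorphism via Theorems~\ref{cat of mono} and~\ref{cd of mono}, then the lower bound $\cd(\phi)\ge n$ from Lemma~\ref{nilbundle} combined with Lemma~\ref{compact support} (with coefficients $\Z\pi$), and the soft upper bounds $\cd(\phi)\le\cat\phi\le n$. Your explicit check that $im(\phi)$, as a subgroup of $\pi$, remains finitely generated, torsion free, and nilpotent is a worthwhile detail the paper leaves implicit.
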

\begin{proof}
In view of Theorem~\ref{cat of mono} and Theorem~\ref{cd of mono}, it suffices to prove the theorem when $\phi$ is surjective. By Lemma~\ref{nilbundle} there is a fiber bundle 
of nilmanifolds $M^m=B\Gamma\to B\pi=N^n$ with a compact fiber.
By Lemma~\ref{compact support}, $\cd(\phi)=n$. The inequalities $\cd(\phi)\le\cat\phi\le\dim N=n$ complete the proof.
\end{proof}

\section{Counterexample}

Answering a question of M. Grant,
T. Goodwillie gave an example of an epimorphism $\phi:G\to\Gamma$ satisfying the inequality $\cd(\phi)<\min\{\cd(G),\cd(\Gamma)\}.$ We present here a slightly modified version
of his example. Let $\Gamma=\Z^2$ and $\phi:G\to\Gamma$ be the epimorphism defined by the extension of $\Gamma$ by the abelian group $I(\Gamma)^2=I(\Gamma)\otimes_\Z I(\Gamma)$ that corresponds to the square of the Berstein-Schwarz cohomology class $(\beta_\Gamma)^2\in H^2(\Gamma,I(\Gamma)^2)$. Then $\phi^*(\beta_\Gamma)^2=0$. Hence, $\cd(\phi)<2$. 
If $\cat\phi\le 1$, then the induced map $\hat\phi:BG\to B\Gamma$ can be lifted to the Ganea's space $G_1(B\Gamma)$ which is homotopy equivalent to 1-dimensional complex  
$\Sigma\Gamma$,
the suspension of $\Gamma$.
~\cite{CLOT}. Then the epimorphism $\phi$ factors through a free group $F$ via epimorphisms $G\to F\to \Gamma$. Hence  $rank F\ge 2$. Therefore, $G$ contains a free group
of rank $\ge 2$. We note that $G$ is amenable as an abelian-by-abelian extension and hence  $G$ cannot contain $F$.
Therefore, $\cat\phi\ge 2$. Thus, $\phi$ is an infinitely generated counterexample to Conjecture~\ref{con}.

\begin{rmk} 
We note there are simpler examples of epimorphisms $\phi:G\to\Gamma$ satisfying  $$\cd(\phi)<\min\{\cd(G),\cd(\Gamma)\}.$$ Namely, the homomorphism $\phi:F_n\ast\Z^n\to\Z^n$
defined by the abelianization of the first factor and by the projection onto $\Z^n\to\Z\subset\Z^n$ has $\cd(\phi)=1$ and $\cd(F^n\ast\Z^n)=\cd\Z^n=n$. Here $F_n$ denotes the free group on $n$ generators.
\end{rmk}
Let $\S$ denote the sphere spectrum.
We recall that every stably parallelizable manifold is $\S$-orientable~\cite{Sw}.
The $\S$-cohomology groups of $X$  are exactly the stable cohomotopy groups $\pi^*_s(X)$.
Then Lemma 3.5 of~\cite{Ru} in the case of the spectrum $\S$ can be stated as follows:
\begin{lemm}\label{rudyak}
Suppose that $f:W\to M$ is a map of degree one  between closed stably paralleliable manifolds. Then the induced map $f^*:\pi^*_s(M)\to\pi_s^*(W)$ is injective.
\end{lemm}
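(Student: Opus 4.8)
The plan is to construct a stable Umkehr (transfer) map $\tau\colon\Sigma^\infty M_+\to\Sigma^\infty W_+$ that splits $f$ up to a self-equivalence, and then to detect the splitting on integral homology. First I would choose a smooth embedding $e\colon W\hookrightarrow\R^N$ with $N$ large, so that $(f,e)\colon W\to M\times\R^N$ is an embedding lying over $M$ via the projection $\mathrm{pr}\colon M\times\R^N\to M$, with normal bundle $\nu$. This $\nu$ satisfies $\nu\oplus TW\cong(f,e)^*T(M\times\R^N)\cong f^*TM\oplus\varepsilon^N$. Since both $W$ and $M$ are stably parallelizable, $TW$ and $f^*TM$ are stably trivial, so $\nu$ is stably trivial of rank $N$; hence its Thom spectrum satisfies $\mathrm{Th}(\nu)\simeq\Sigma^N\Sigma^\infty W_+$ stably. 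The Pontryagin--Thom collapse $\Sigma^N\Sigma^\infty M_+\to\mathrm{Th}(\nu)$ then desuspends to the desired stable map $\tau\colon\Sigma^\infty M_+\to\Sigma^\infty W_+$, and it is here that stable parallelizability (equivalently, the $\S$-orientability recalled above) is essential.

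The key step is to analyze the self-map $f_+\circ\tau\colon\Sigma^\infty M_+\to\Sigma^\infty M_+$. On integral homology the collapse $\tau$ induces the Umkehr (Gysin) homomorphism $f^!\colon H_*(M;\Z)\to H_*(W;\Z)$, the Poincar\'e dual of $f^*$, so that $(f_+\circ\tau)_*=f_*\circ f^!$. The projection formula together with $f_*[W]=(\deg f)[M]=[M]$ yields $f_*\circ f^!=(\deg f)\cdot\mathrm{id}=\mathrm{id}$ on $H_*(M;\Z)$. Thus $f_+\circ\tau$ is a self-map of the finite, connective spectrum $\Sigma^\infty M_+$ inducing the identity on integral homology, and by the Whitehead theorem for spectra it is a stable homotopy equivalence.

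Finally, applying $\pi^*_s(-)$ and using contravariance, $\tau^*\circ f^*=(f_+\circ\tau)^*\colon\pi^*_s(M)\to\pi^*_s(M)$ is an isomorphism; consequently $f^*\colon\pi^*_s(M)\to\pi^*_s(W)$ admits a left inverse and is injective. I expect the main obstacle to be the temptation to argue by a one-line appeal to Poincar\'e duality over $\S$: the ordinary degree-one hypothesis does \emph{not} force the $\S$-fundamental classes to agree, i.e.\ $f_*[W]\ne[M]$ in $\S_n(M_+)$ in general, since the kernel of $\S_n(M_+)\to H_n(M;\Z)$ need not vanish. The argument above sidesteps this by extracting only the integral-homology effect of the genuine stable map $f_+\circ\tau$ and invoking Whitehead; the remaining care lies in verifying that the stable trivialization of $\nu$ is compatible with the orientations so that $\tau$ realizes $f^!$ on homology.
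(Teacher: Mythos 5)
Your argument is correct, but it takes a genuinely different route from the paper's, which in fact contains no direct proof at all: the lemma is stated as the special case $E=\S$ of Lemma 3.5 of Rudyak \cite{Ru}, combined with the quoted fact that stably parallelizable manifolds are $\S$-orientable \cite{Sw}. Rudyak's argument stays entirely inside $\S$-(co)homology: choose an $\S$-orientation $[W]_{\S}\in\S_n(W)$; because $\S$ is connective, a class in $\S_n(M)$ is an $\S$-orientation as soon as its image in $H_n(M;\Z)$ is a fundamental class, so $f_*[W]_{\S}$ is an $\S$-orientation of $M$ whenever $\deg f=1$; then $f^*x=0$ forces $0=f_*(f^*x\frown [W]_{\S})=x\frown f_*[W]_{\S}$ by the projection formula, and since capping with an $\S$-orientation is an isomorphism, $x=0$. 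You instead construct the splitting geometrically, via a Pontryagin--Thom transfer $\tau$, and detect that $f_+\circ\tau$ is an equivalence using ordinary integral homology and the stable Whitehead theorem. Your route is more self-contained (it never invokes Poincar\'e duality over a ring spectrum), but it uses smoothness (embeddings, tubular neighborhoods) and the full strength of stable parallelizability to trivialize the normal bundle $\nu$; the cited route is shorter given the machinery and more general, applying to any ring spectrum $E$ and requiring only $E$-orientability of the two manifolds. Incidentally, your closing caveat slightly misses how the cited proof works: one never needs $f_*[W]_{\S}$ to coincide with a preferred class $[M]_{\S}$, only to be \emph{some} $\S$-orientation of $M$, and that is automatic by the bottom-detection principle just described; so a short appeal to $\S$-Poincar\'e duality does succeed when phrased this way, and your transfer $\tau$ is precisely the map that this duality produces.
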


We note that the natural map $[X,S^n]\to\pi^n_s(X)$ is a bijection when $\dim X\le 2n-2$~\cite{Hu}.

\subsection{Bolotov's example} Answering a question of Gromov~\cite{Gro1},
D. Bolotov constructed~\cite{Bo} a closed 4-manifold $M$ with the fundamental group $\pi=\Z\ast \Z^3$ whose cohomological dimension $\cd(\pi)=3$ such that 
a classifying map $u_M:M\to B\pi$ cannot be deformed to the 2-skeleton $B\pi^{(2)}$. His manifold is defined as the pull-back $M=g^*(S^3\times S^1)$
of the $S^1$-bundle $h\times 1:S^3\times S^1\to S^2\times S^1$, where $h$ is the Hopf fiber bundle, with respect to the collapsing map $g:N=(S^2\times S^1)\# T^3\to S^2\times S^1$.
Here $T^3$ is a 3-dimensional torus. The pull-back bundle is denoted by $p:M\to N$.
\begin{prop} \label{bolotov}
Bolotov's manifold has the following properties:

(a) The map $p$ induces an isomorphism of the fundamental groups.

(b) The homomorphism $u_M^*: H^3(B\pi;A)\to H^3(M;A)$ is trivial for any $\pi$-module $A$.

(c) The through map
$
M\stackrel{u_M}\rightarrow B\pi=S^1\vee T^3\stackrel{q_2}\to T^3\stackrel{q_1}\rightarrow S^3
$
is essential where $q_2$ collapses $S^1$ to the wedge point, and $q_1$ is a map of degree one.

(d) The manifold $M$ is stably parallelizable.
\end{prop}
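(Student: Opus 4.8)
The plan is to treat the four assertions in increasing order of difficulty, the real content being (c). Throughout write $e\in H^2(N;\Z)$ for the Euler class of the oriented circle bundle $p\colon M\to N$. Since $M=g^*(S^3\times S^1)$ and the Euler class of $h\times 1$ is the generator $a\in H^2(S^2\times S^1)$ pulled back from $S^2$, we have $e=g^*(a)$, the class supported on the $S^2\times S^1$ summand of $N=(S^2\times S^1)\#T^3$; forming the connected sum away from $S^2\times\{pt\}$ keeps this $2$-sphere in $N$, so $\langle e,[S^2]\rangle=1$. Write $H^1(N)=\langle s,t_1,t_2,t_3\rangle$ with $s$ the $S^1$-class of the first summand and $t_i$ the classes of $T^3$; then $e\cup s$ generates $H^3(N;\Z)$ while $e\cup t_i=0$ (their Poincar\'e duals lie in disjoint summands). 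For (a) I use the homotopy exact sequence of $S^1\to M\xrightarrow{p}N$: the boundary $\partial\colon\pi_2(N)\to\pi_1(S^1)=\Z$ sends $[S^2\times\{pt\}]$ to $\langle e,[S^2]\rangle=1$, hence is onto, so $\pi_1(S^1)\to\pi_1(M)$ is zero and $p_*$ is an isomorphism. For (d) I note every closed orientable $3$-manifold, in particular $N$, is parallelizable, and the vertical tangent line bundle of an oriented circle bundle is trivial; thus $TM\cong p^*TN\oplus\underline{\R}\cong\underline{\R^4}$ and $M$ is (stably) parallelizable.

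For (b) I use $u_M=u_N\circ p$ (legitimate by (a)) and that $q_2\circ u_N\simeq c\colon N\to T^3$ is the collapse, both classifying the projection $\pi\to\Z^3$; hence on $H^3(B\pi;A)\cong H^3(T^3;A)$ the map $u_M^*$ factors as $p^*\circ c^*$. By the Gysin sequence of the circle bundle, $\ker\bigl(p^*\colon H^3(N;A)\to H^3(M;A)\bigr)=\operatorname{im}\bigl(\cup e\colon H^1(N;A)\to H^3(N;A)\bigr)$, so it suffices that $\cup e$ be onto. Under Poincar\'e duality on the closed oriented $3$-manifold $N$ this is intersection with the circle $PD(e)=\{pt\}\times S^1$; since $[S^2]\cdot(\{pt\}\times S^1)=1$, pushing the fundamental class of the simply connected sphere $S^2$ with an arbitrary coefficient $a\in A$ realizes every element of $H_0(N;A)=A_\pi\cong H^3(N;A)$. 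Hence $\cup e$ is surjective, $p^*$ vanishes on $H^3(N;A)$, and $u_M^*=0$ there for every $\pi$-module $A$.

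The substance is (c). Since $\dim M=4=2\cdot 3-2$, the bijection $[M,S^3]\cong\pi^3_s(M)$ applies, so I must show that the class of $\bar f:=q_1\circ q_2\circ u_M=(q_1\circ c)\circ p$ is nonzero there. The map $q_1\circ c\colon N\to S^3$ has degree one between stably parallelizable manifolds, so by Lemma~\ref{rudyak} its class $\phi$ generates $\pi^3_s(N)\cong\Z$; everything thus reduces to proving $p^*\phi\neq0$. Observe that (b) forces the image of $p^*\phi$ in $H^3(M;\Z)$ to vanish, so $p^*\phi$ can only be a \emph{secondary} (two-torsion) class detected in $H^4(M;\Z/2)$ by the edge homomorphism of the Atiyah--Hirzebruch spectral sequence; by (d) the manifold $M$ is spin, so $\operatorname{Sq}^2$ vanishes on $H^2(M;\Z/2)$, the differential $d_2$ is zero, and this secondary group is exactly $\Z/2$.

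To locate $p^*\phi$ I would use the stable Gysin (cofibre) sequence of the circle bundle: writing $L$ for the complex line bundle with $c_1(L)=e$ and $N^{L}$ for its Thom space, the cofibre sequence $M_+\to N_+\to N^{L}$ yields exactness of $\tilde\pi^3_s(N^{L})\xrightarrow{j^*}\pi^3_s(N)\xrightarrow{p^*}\pi^3_s(M)$, so $\ker p^*=\operatorname{im} j^*$. Running the spectral sequence for $N^{L}$ with the Thom isomorphism and the Wu formula $\operatorname{Sq}^2U=\bar e\cup U$, the classes $U\cup t_i$ survive (since $\bar e\cup\bar t_i=e\cup t_i\bmod 2=0$) while $U\cup s$ supports the nonzero differential $d_2(U\cup s)=\bar e\cup\bar s\neq0$, because $e\cup s$ generates $H^3(N;\Z)$. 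Since $j^*(U\cup s)=e\cup s=\phi$, it follows that $\operatorname{im} j^*=2\Z\subsetneq\Z=\pi^3_s(N)$, so $\phi\notin\ker p^*$ and $p^*\phi\neq0$; thus $\bar f$ is essential. The main obstacle is precisely this last step: converting the vanishing of the primary (integral) obstruction into a \emph{nonzero} functional Steenrod square, i.e.\ verifying $d_2(U\cup s)=\bar e\cup\bar s\neq0$ and checking that no class of higher Atiyah--Hirzebruch filtration fills the generator back in. The spin hypothesis from (d) is exactly what keeps the target group $\Z/2$ alive so that this nonzero class has somewhere to land.
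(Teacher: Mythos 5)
Your proposal is correct, but in parts (b) and (c) it takes a genuinely different route from the paper. For (b), the paper never touches the bundle structure: it argues via Poincar\'e duality with local coefficients that a nonzero $u_M^*(a)$ would cup nontrivially with a one-dimensional class, which by universality of the Berstein--Schwarz class may be taken to be $u_M^*(\beta_\pi)$, giving $0\neq u_M^*(a\cup\beta_\pi)$ while $a\cup\beta_\pi\in H^4(B\pi;\cdot)=0$ for dimension reasons. Your substitute --- the Gysin sequence with local coefficients, plus the geometric observation that $\cup\, e:H^1(N;A)\to H^3(N;A)$ is onto because the simply connected $S^2\subset N$ meets the Poincar\'e dual circle of $e$ once and carries arbitrary coefficients $a\in A$ --- is equally valid and more concrete. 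For (c), the paper works unstably: it shows $q=q_1q_2q_3q_4$ does not extend over the mapping cone $C_p$, since any extension into $K(\Z,3)$ has nonvanishing primary obstruction $Sq^2\bar\alpha$ to compression into $S^3=K(\Z,3)^{(4)}$, the nonvanishing being computed by identifying $C_p$ with the Thom space of the associated disk bundle and using $Sq^2u=u\cup u$. You stabilize instead: $[M,S^3]=\pi^3_s(M)$ in this range, the cofiber sequence $M_+\to N_+\to N^L$ gives $\ker p^*=\mathrm{im}\, j^*$, and the AHSS differential $d_2=Sq^2\circ\rho$ on the Thom space, via $Sq^2U=\bar e\cup U$, forces $\mathrm{im}\, j^*$ into $2\Z\subset\Z\cong\pi^3_s(N)$, missing the degree-one class $\phi$. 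Note that the two proofs turn on the identical computation --- your $N^L$ \emph{is} the paper's $C_p$, and $Sq^2U=u\cup u=\bar e\cup U$ is the same identity --- so the difference is packaging: primary obstruction theory versus the stable cofiber sequence plus the AHSS edge map. Your version meshes well with Lemma~\ref{rudyak} and the stable apparatus the paper deploys anyway in its final theorem; the paper's version needs no spectral sequence.

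Two small repairs. In (b), the detour through $T^3$ is unnecessary and slippery with local coefficients: for an arbitrary $\pi$-module $A$ the group $H^3(T^3;A)$ only makes sense after restricting $A$ along $\Z^3\hookrightarrow\pi$, and the system pulled back to $N$ via the collapse is then not the original $A$; you should simply write $u_M^*=p^*\circ u_N^*$ and note that your Gysin argument kills all of $H^3(N;A)$. In (c), Lemma~\ref{rudyak} gives only $\phi\neq 0$, not that $\phi$ generates; generation follows instead from the edge isomorphism $\pi^3_s(N)\cong H^3(N;\Z)$ (or Hopf's theorem) together with $\deg(q_1\circ c)=1$ --- and that is all you need, since your computation establishes $\mathrm{im}\, j^*\subseteq 2\Z$ (the claimed equality is neither proved nor required).
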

\begin{proof}
(a) This is straightforward (\cite{Bo}). Thus, the classifying map $u_M$ factors through $p:M\to N$,  $u_M=q_3q_4p$ where $q_4:N\to (S^2\times S^1)\vee T^3 $ collapses the connected sum to the wedge and $q_3:(S^2\times S^1)\vee T^3\to S^1\vee T^3$ projects $S^2\times S^1$ onto the factor $S^1$. 

 (b) Let $a\in H^3(B\pi;A)$. By the Poincare Duality for local coefficients there is 1-dimensional class $\bar\beta$ in $M$ such that $ u_M^*(a)\cup\bar\beta\ne 0$. In view of universality of the Berstein-Schwarz class $\beta_\pi$ and the fact that $u_M$ induces an isomorphism of 1-cohomology,
we may assume that $\bar\beta=u_M^*(\beta_\pi)$. Then $0\ne u_M^*(a)\cup\bar\beta=u_M^*(a\cup\beta_\pi)=0$. The last equality is due to dimensional reason.

(c)  This was proven in~\cite{Bo}. Here we present a simplified proof. Let $q=q_1q_2q_3q_4:N\to S^3$:
$$
N=(S^2\times S^1)\# T^3\stackrel{q_4}\to (S^2\times S^1)\vee T^3\stackrel{q_3}\to S^1\vee T^3\stackrel{q_2}\to T^3\stackrel{q_1}\to S^3.
$$
Let $a$ be a generator in $H^3(K(\Z,3))$ and let $a_0$ be its restriction to $S^3=K(\Z,3)^{(4)}$.
Part (b) implies that $p^*(\alpha_0)= 0$ where $\alpha_0=q^*(a_0)$. The exact sequence of pair $(C_p,N)$ implies  $(j^*)^{-1}(\alpha_0)\ne\emptyset$ 
where $j:N\to C_p$ is the inclusion of the codomain of $p$ into the mapping 
cone. It suffices to show that $q:N\to S^3$ does not extend to $C_p$. Every extension $\psi'$ of $q$ to the 4-skeleton $C_p^{(4)}$ can be extended to a map $\psi: C_p\to K(\Z,3)$
defining an element   $\alpha\in (j^*)^{-1}(\alpha_0)$. We show that such $\psi$ cannot be deformed to $S^3\subset K(\Z,3)$.
Here we assume  that $K(\Z,3)^{(5)}=S^3\cup_\nu D^5$. 

For a cohomology class $x$ we denote by $\bar x$ its mod 2 reduction. Since $q$ is a map of degree one, $\bar\alpha_0\ne 0$.
We recall that the obstruction to retraction of  $S^3\cup_\nu D^5$ to $S^3$ is the Steenrod square $Sq^2\bar a$. By the naturality of primary obstructions the obstruction to deform the map
$\psi:C_p\to K(\Z,3)$ to $S^3$  is $\psi^*(Sq^2\bar a)=Sq^2\bar\alpha$. We show that $Sq^2\bar\alpha\ne 0$.
Let $E$ be the disk bundle associated with the circle budle $p$. For the use of cohomology we can identify $N$ with $E$. Then $M=\partial E$ and the pair $(E,M)$ can be identified with the mapping cone $C_p$.
In view of the Thom isomorphism there is $x\in H^1(N;\Z_2)$ such that $x\cup u=\bar\alpha$ where $u$ is the mod 2 Thom class. Since $j^*(\bar\alpha)=\bar\alpha_0$ the naturality of the cup product
$$
\begin{CD}
H^3(N,\partial N;\Z_2)\times H^2(N,\partial N;\Z_2) @>\cup>> H^5(N,\partial N;\Z_2)\\
@Vj^*\times 1VV @V1VV\\
H^3(N;\Z_2)\times H^2(N,\partial N;\Z_2) @>\cup>> H^5(N,\partial N;\Z_2)\\
\end{CD}
$$
implies that $\bar\alpha_0\cup u=\bar\alpha\cup u$.
Then
$$0\ne\bar\alpha_0\cup u=\bar\alpha\cup u=x\cup u\cup u=x\cup Sq^2 u=Sq^2(x\cup u)=Sq^2(\bar\alpha).$$
Here we use the equality $Sg^2u=u\cup u$ and the Cartan formula for Steenrod squares.

(d) The manifold $M$ is stably parallelizable as the total space of an orientable $S^1$-bundle over a stably parallelizable manifold $N$.
\end{proof}

We recall that for every closed manifold $M$ there is a hyperbolization $f:W\to M$ which a map of degree one of a closed aspherical manifold which is surjective on the fundamental groups. Moreover, the map $f$ induces an isomorphism between stable tangent bundles of $M$ and $W$~\cite{DJ},\cite{CD},\cite{Gro2}.

\begin{thm}
There is a map of a closed aspherical 4-manifold $g:W\to T^3$ onto a 3-torus that induces an epimorphism of the fundamental groups $g_\#:\pi_1(W)\to\Z^3$ such that  $\cat g_\#=3$ and $\cd(g_\#)<3$.
\end{thm}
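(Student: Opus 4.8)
The plan is to build the counterexample by pulling Bolotov's manifold back through a hyperbolization. Let $f:W\to M$ be a hyperbolization of the $4$-manifold $M$ of Proposition~\ref{bolotov}, so that $W$ is a closed aspherical $4$-manifold, $f$ has degree one and is surjective on $\pi_1$, and $f$ induces an isomorphism of stable tangent bundles; since $M$ is stably parallelizable by~(d), so is $W$. I would then define $g$ to be the composition $W\xrightarrow{f}M\xrightarrow{u_M}B\pi=S^1\vee T^3\xrightarrow{q_2}T^3$. Because $f_\#$ is onto, $(u_M)_\#$ is an isomorphism, and $(q_2)_\#$ kills the free $\Z$-factor while restricting to the identity on $\Z^3$, the induced map $g_\#:\pi_1(W)\to\Z^3$ is an epimorphism, as required.

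To bound $\cd(g_\#)$ from above I would factor the coefficient homomorphism. For any $\Z^3$-module $A$ the map $g_\#^*=g^*$ on $H^3$ factors as $H^3(T^3;A)\xrightarrow{q_2^*}H^3(B\pi;q_2^*A)\xrightarrow{u_M^*}H^3(M;q_2^*A)\xrightarrow{f^*}H^3(W;g^*A)$, and the middle arrow vanishes by Proposition~\ref{bolotov}(b). Hence $g^*=0$ on $H^3$ for every coefficient module, so $\cd(g_\#)\le 2<3$. The upper bound $\cat g\le 3$ is immediate from $\cat g\le\cat T^3=3$ (pull back a categorical open cover of $T^3$).

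The heart of the argument is the lower bound $\cat g\ge 3$, and here I would pass to stable cohomotopy. First, the through map $q_1q_2u_M:M\to S^3$ is essential by~(c); since $\dim M=4\le 2\cdot 3-2$, the bijection $[M,S^3]\cong\pi^3_s(M)$ makes its class nonzero in $\pi^3_s(M)$. As $f:W\to M$ is a degree-one map of closed stably parallelizable manifolds, Lemma~\ref{rudyak} gives that $f^*:\pi^3_s(M)\to\pi^3_s(W)$ is injective, so $[q_1g]=f^*[q_1q_2u_M]\ne 0$ in $\pi^3_s(W)$. Now I observe that the three coordinate projections $\mathrm{pr}_i:T^3\to S^1$ define reduced classes $u_i\in\pi^1_s(T^3)$ whose product in the multiplicative cohomotopy ring is exactly the collapse $q_1:T^3\to S^3$, because $(\mathrm{pr}_1\wedge\mathrm{pr}_2\wedge\mathrm{pr}_3)\circ\Delta$ is the quotient $T^3\to T^3/(T^3)^{(2)}=S^3$. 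Since $g^*$ is multiplicative, $g^*u_1\smile g^*u_2\smile g^*u_3=g^*[q_1]=[q_1g]\ne 0$ in $\pi^3_s(W)$, and the cup-length estimate for the category of a map, run in the multiplicative theory $\pi^*_s$, yields $\cat g\ge 3$. Combining the two bounds gives $\cat g_\#=3>\cd(g_\#)$.

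The main obstacle is precisely this lower bound. The vanishing $\cd(g_\#)<3$ says that the primary, Berstein--Schwarz obstruction to lowering the category is already gone, so ordinary cohomology cannot detect that $\cat g=3$; what survives is a secondary, $Sq^2$-type phenomenon, exactly the one powering Bolotov's~(c). The two delicate points are therefore (i) transporting the essentialness of the top cohomotopy class from $M$ to $W$ without loss, which is what Lemma~\ref{rudyak} together with stable parallelizability is for, and (ii) running the cup-length lower bound in stable cohomotopy rather than in ordinary cohomology, which hinges on recognizing $q_1$ as a genuine triple product of weight-one cohomotopy classes. Once these are in place the generalized cup-length inequality does the rest.
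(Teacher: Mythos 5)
Your construction, the computation of $\cd(g_\#)$, and the transport of essentialness from $M$ to $W$ via stable parallelizability, the tangentiality of hyperbolization, Lemma~\ref{rudyak}, and the identification $[X,S^3]\cong\pi^3_s(X)$ in dimension $4$ are exactly the paper's argument; the proposal is correct. The only point where you diverge is the final inference from $[q_1\circ g]\ne 0$ to $\cat g\ge 3$: the paper argues that $\cat g\le 2$ would let $g$ compress into the $2$-skeleton of $T^3$ (since the second Ganea space of an aspherical complex has the homotopy type of a $2$-complex), after which $q_1\circ g$ would be null because $q_1$ collapses that skeleton; you instead write $[q_1]=u_1\smile u_2\smile u_3$ for the coordinate classes $u_i\in\pi^1_s(T^3)$ and invoke the cup-length lower bound for $\cat$ of a map in the multiplicative theory $\pi^*_s$. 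Both steps are valid and of comparable length; your version trades the skeletal-compression fact for the (equally standard) generalized cup-length inequality and the identification of the degree-one collapse $T^3\to S^3$ with the triple product of the projections, which is correct since the reduced diagonal followed by $\mathrm{pr}_1\wedge\mathrm{pr}_2\wedge\mathrm{pr}_3$ is the quotient by the fat wedge.
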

\begin{proof}
Let $M$ be the Bolotov's example and let $f:W\to M$ be the  hyperbolization of $M$.
Let $g=q_2\circ u_M\circ f$. Since Bolotov's example is stably parallelizable and the hyperbolization is a tangential map,
by Lemma~\ref{rudyak} we obtain that $f^*:\pi_s^3(M)\to\pi_s^3(W)$ is injective.
By dimensional reason $[M,S^3]=\pi_s^3(M)$ and $[W,S^3]=\pi_s^3(W)$. Thus, since by Proposition~\ref{bolotov} (c) the map $q_1q_2u_M:M\to S^3$ is essential, 
the map $q_1\circ g:W\to S^3$ is essential as well. Hence, the map $g$ cannot be deformed to the 2-skeleton. Therefore,
$\cat g>2$. Clearly, $\cat g=\cat g_\#=3$.

By Proposition~\ref{bolotov} (a) the homomorphism  $(q_1\circ u_M)^*$ is trivial on 3-dimensional cohomology. Hence, so is $g^*$. This means that $\cd(g_\#)<3$.
\end{proof}

\section*{Acknowledgments}
The first author was supported by the Simons Foundation Grant.

\

\footnotesize

\end{document}